\newtheorem{theorem}{Theorem}[section]
\newtheorem{definition}[theorem]{Definition}
\newtheorem{lemma}[theorem]{Lemma}
\newtheorem{remark}[theorem]{Remark}
\newcommand{\Th}{\mathcal{T}_h}
\numberwithin{equation}{section}
\begin{document}
\title{An easily computable error estimator in space and time for the wave equation}
\author{O. Gorynina}\address{Laboratoire de Math\'{e}matiques de Besan\c{c}on, CNRS UMR 6623, Univ. Bourgogne Franche-Comt\'{e}, 16 route de Gray, 25030 Besan\c{c}on Cedex, France, \email{olga.gorynina@univ-fcomte.fr \ \&\ alexei.lozinski@univ-fcomte.fr}}
		  	
\author{A. Lozinski}\sameaddress{1}
            
\author{M. Picasso}\address{Institute of Mathematics, Ecole Polytechnique F\'{e}d\'{e}rale de Lausanne, Station 8, CH 1015, Lausanne, Switzerland, \email{marco.picasso@epfl.ch}}
\date{...}

\begin{abstract}
We propose a cheaper version of \textit{a posteriori} error estimator from \cite{our1} for the linear second-order wave equation discretized by the Newmark scheme in time and by the finite element method in space. The new estimator preserves all the properties of the previous one (reliability, optimality on smooth solutions and quasi-uniform meshes) but no longer requires an extra computation of the Laplacian of the discrete solution on each time step. 
\end{abstract}
%
%
\subjclass{65M15, 65M50, 65M60}
\keywords{\textit{a posteriori} error bounds in time and space \and wave equation \and Newmark scheme }
\maketitle
\section*{Introduction}
\label{intro}

In this paper, we are interested in \textit{a posteriori} time-space error estimates for finite element discretisations of the wave equation. Such estimates were designed, for instance, in \cite{BS} for the case of implicit Euler discretization in time, in \cite{Georgoulis13}, \cite{Georgoulis16} for the case of second order discretizations in time by Cosine (or, equivalently, Newmark) scheme, and in \cite{our1} for a particular variant of the Newmark scheme ($\beta=1/4 $, $\gamma=1/2$). In both \cite{Georgoulis16} and  \cite{our1}, the error is measured in a physically natural norm: $H^1$ in space, $L^\infty$ in time. Another common feature of these two papers is that the  time error estimators proposed there contain the Laplacian of the discrete solution which should be computed via auxiliary finite element problems at each time step. This requires thus a non-negligible extra work in comparison with computing the discrete solution itself.  In the present paper, we propose an alternative time error estimator for the particular Newmark scheme considered in \cite{our1} that avoids these additional computations. 

In deriving our \textit{a posteriori} estimates, we follow first the approach of \cite{our1}. First of all, we recognize that the Newmark method can be reinterpreted as the Crank-Nicolson discretization of the reformulation of the governing equation as the first-order system, as in \cite{Baker76}. We then use the techniques stemming from \textit{a posteriori} error analysis for the Crank-Nicolson discretization of the heat equation in \cite{LPP}, based  on a piecewise quadratic polynomial in time reconstruction of the numerical solution. Finally, in a departure from \cite{our1}, we replace the second derivatives in space (Laplacian of the discrete solution) in the error estimate with the forth derivatives in time by reusing the governing equation. This leads to the new \textit{a posteriori} error estimate in time and also allows us to easily  recover the error estimates in space that turn out to be the same as those of \cite{our1}. The resulting estimate is referred to as the $5$-point estimator since it contains the fourth order finite differences in time and thus involves the discrete solution at $5$ points in time at each time step. On the other hand, the estimate \cite{our1} involves only 3 points in time at each time step and will be thus referred to as the $3$-point estimator. 

Like in the case of the 3-point estimator, we are able to prove that the new 5-point estimator is reliable on general regular meshes in space and non-uniform meshes in time (with constants depending on the regularity of meshes in both space and time). Moreover, the 5-point estimator is proved to be of optimal order at least on sufficiently smooth solutions, quasi-uniform meshes in space and uniform meshes in time, again reproducing the results known for the 3-point estimator. Numerical experiments demonstrate that 3-point and 5-point error estimators produce very similar results in the majority of test cases. Both turn out to be  of optimal order in space and time, even in situations not accessible to the current theory (non quasi-uniform meshes, not constant time steps). It should be therefore possible to use the new estimator  for mesh adaptation in space and time. In fact, the best strategy in practice may be to combine both estimators to take benefit from the strengths of each of them: the relative cheapness of the 5-point one, and the better numerical behavior of the 3-point estimator under abrupt changes of the mesh. 

The outline of the paper is as follows. We present the governing equations and the discretization in Section \ref{section2}. Since our work based on techniques from \cite{our1}, Section \ref{sectionRecall} is devoted to reminding the \textit{a posteriori} bounds in time and space from there. In Section \ref{section3}, the 5-point \textit{a posteriori} error estimator for the fully discrete wave problem is derived. Numerical experiments on several test cases are presented in Section \ref{section4}. 

\section{The Newmark scheme for the wave equation}\label{section2}

We consider initial-boundary-value problem for the wave equation. Let $\Omega$ be a bounded domain in $\mathbb{R}^2$ with boundary $\partial \Omega$ and $T>0$ be a given final time. Let $u=u(x,t) : \Omega\times\left[0,T \right]\to\mathbb{R}$ be the solution to
\begin{equation}
   \begin{cases}
      \cfrac{\partial^2 u}{\partial t^2}-\Delta u=f,&\mbox{in}~ \Omega\times\left]0,T     \right],\\
      u=0,&\mbox{on}~ \partial\Omega\times\left]0,T \right],\\
      u(\cdot,0)=u_0,&\mbox{in}~\Omega,\\
      \cfrac{\partial u}{\partial t}(\cdot,0)=v_0,&\mbox{in}~\Omega,
   \end{cases}
   \label{wave}
\end{equation}
where $f,u_0,v_0$ are given functions. Note that if we introduce the auxiliary unknown $v=\frac{\partial u}{\partial t}$ then model (\ref{wave}) can be rewritten as the following first-order in time system 
\begin{equation}
   \begin{cases}
      \cfrac{\partial u}{\partial t}-v=0,  &\mbox{in}~ \Omega\times\left]0,T \right], \\
      \cfrac{\partial v}{\partial t}-\Delta u=f,  &\mbox{in}~ \Omega\times\left]0,T \right], \\
      u=v=0,&\mbox{on}~ \partial\Omega\times\left]0,T \right]  ,\\
      u(\cdot,0)=u_0,~v(\cdot,0)=v_0,~&\mbox{in}~\Omega.
      \label{syst}
   \end{cases}
\end{equation}
The above problem (\ref{wave}) has the following weak formulation \cite{evans2010partial}: for given  \newline 
$f\in L^{2}(0,T;L^2(\Omega))$, $u_0\in H^1_0(\Omega)$ and $v_0\in L^2(\Omega)$ find a function 
\begin{equation}
   u\in L^{2}\left(0,T;H^1_0(\Omega)\right),~\cfrac{\partial u}{\partial t}\in L^{2}\left(0,T;L^2(\Omega)\right),~\cfrac{\partial^2 u}{\partial t^2}\in L^{2}\left(0,T;H^{-1}(\Omega)\right)
\end{equation}
such that $u(x,0)=u_0$ in $H^1_0(\Omega)$, $\cfrac{\partial u}{\partial t}(x,0)=v_0$ in $L^2(\Omega)$ and
\begin{equation}
   \left\langle\cfrac{\partial^2 u}{\partial t^2},\varphi\right\rangle+\left(\nabla u,\nabla \varphi\right)=\left(f,\varphi\right),~\forall\varphi  \in H^1_0(\Omega),
   \label{weakwave}
\end{equation}
where $\left\langle \cdot, \cdot \right\rangle$ denotes the duality pairing between $ H^{-1}(\Omega)$ and $ H^1_0(\Omega)$ and the parentheses $( \cdot, \cdot)$ stand for the inner product in
$L^2 ( \Omega)$. 
Following Chap. 7, Sect. 2, Theorem 5 of \cite{evans2010partial}, we observe that in fact 
\begin{equation*}
   u\in C^{0}\left(0,T;H^1_0(\Omega)\right),~\cfrac{\partial u}{\partial t}\in C^{0}\left(0,T;L^2(\Omega)\right),~\cfrac{\partial^2 u}{\partial t^2}\in C^{0}\left(0,T;H^{-1}(\Omega)\right).
\end{equation*}
Higher regularity results with more regular data are also available in \cite{evans2010partial}.

Let us now discretize (\ref{wave}) or, equivalently, (\ref{syst}) in space using the finite element method and in time using an appropriate marching scheme. We thus introduce a regular mesh $\mathcal{T}_h$ on $\Omega$ with triangles $K$, $\mathrm{diam}~K=h_{K}$, $h=\max_{K\in\Th}h_K$, internal edges $E\in\mathcal{E}_h$, where $\mathcal{E}_h$ represents the internal edges of the mesh $\mathcal{T}_h$  and the standard finite element space ${V}_h\subset H^1_0(\Omega ) $:
$$
V_h=\left\{v_h\in C(\bar\Omega):v_h|_K\in \mathbb{P}_1~\forall K\in\mathcal{T}_h \text{ and }v_h|_{\partial\Omega}=0\right\}.
$$
Let us also introduce a subdivision of the time interval $[0,T]$
$$0=t_0<t_1<\dots<t_N=T$$
with non-uniform time steps $\tau_n=t_{n+1}-t_n$ for $n=0,\ldots,N-1$ and $\tau=\displaystyle\max_{0 \leq n \leq N-1}\tau_n$ . 

The Newmark scheme \cite{newmark1959, RT} with coefficients $\beta =
1/4, \gamma = 1/2$ as applied to the wave equation
(\ref{wave}):
given approximations $u^0_h, v^0_h \in V_h$ of $u_0, v_0$ compute $u^1_h \in V_h$ from
\begin{multline}
   \label{Newm1}\left( \frac{u^1_h - u^0_h}{\tau_0}, \varphi_h \right) + \left( \nabla\frac{\tau_0 (u^1_h + u^0_h)}{4}, \nabla \varphi_h \right) 
    = \left( v_h^0 +\frac{\tau_0}{4} (f^1 + f^0), \varphi_h \right), \\ \forall\varphi_h \in V_h
\end{multline}
and then compute $u^{n+1}_h \in V_h$ for $n = 1, \ldots, N-1$ from equation
\begin{align}
   \left( \frac{u_h^{n + 1} - u_h^n}{\tau_n} - \frac{u_h^n - u_h^{n -1}}{\tau_{n - 1}}, \varphi_h \right) + \left( \nabla \frac{\tau_n (u_h^{n + 1}+ u_h^n) + \tau_{n - 1} (u_h^n + u_h^{n - 1})}{4}, \nabla \varphi_h \right)&\notag\\
   =\left( \frac{\tau_n (f^{n + 1} + f^n) + \tau_{n - 1} (f^n + f^{n - 1})}{4},\varphi_h \right), \hspace{1em} \forall \varphi_h \in V_h.&
   \label{Newm2}
\end{align}
where $f^n$ is an abbreviation for $f ( \cdot,t_n)$.

Following \cite{Baker76} and \cite{our1}, we observe that this scheme is equivalent to the Crank-Nicolson discretization of the governing equation written in the form (\ref{syst}): taking $u^0_h,v^0_h\in V_h$ as some approximations to $u_0,v_0$ compute $u^n_h,v^n_h\in V_h$ for $n=0,\ldots,N-1$ from the system
\begin{align}
\label{CNh1}
\frac{u_h^{n + 1} - u_h^n}{\tau_n} - \frac{v_h^n + v_h^{n+1}}{2} &= 0,
\\
\label{CNh2}
\left( \frac{v^{n + 1}_h - v^n_h}{\tau_n}, \varphi_h \right) + \left( \nabla \frac{u^{n + 1}_h + u^n_h}{2}, \nabla \varphi_h \right) &= \left(  \frac{f^{n+1} + f^n}{2}, \varphi_h \right), \hspace{1em} \forall \varphi_h \in V_h.
\end{align}
Note that the additional unknowns $v_n^h$ are the approximations are not present in the Newmark scheme (\ref{Newm1})--(\ref{Newm2}). If needed, they can be recovered on each time step by the following easy computation
\begin{equation}\label{vhform}
     v_h^{n + 1} = 2 \frac{u_h^{n + 1} - u_h^n}{\tau_n} - v_h^n.
\end{equation}

From now on, we shall use the following notations
\begin{align}\label{notation}
u_h^{n+1/2}& :=\frac{u_h^{n+1}+u_h^{n}}{2},
\quad
\partial _{n+1/2}u_h:=\frac{u_h^{n+1}-u_h^{n}}{\tau_n},
\quad
\partial _{n}u_h:=\frac{u_h^{n+1}-u_h^{n-1}}{\tau_n+\tau_{n-1}} \\
\notag \partial _{n}^{2}{u_h}&:=\frac{1}{\tau_{n-1/2}}\left(\frac{u_h^{n+1}-u_h^{n}}{\tau_n}-\frac{u_h^{n}-u_h^{n-1}}{\tau_{n-1}}\right)
\text{ with } \tau_{n-1/2}:=\frac{\tau_n+\tau_{n-1}}{2}
\end{align}
We apply this notations to all quantities indexed by a superscript, so that, for example, $f^{n+1/2}=({f^{n+1}+f^n})/{2}$.
We also denote $u (x,t_n)$, $v(x,t_n)$ by $u^n$, $v^n$ so that, for example, $u^{n+1/2}=\left({u^{n+1}+u^n}\right)/{2}=\left(u(x,t_{n+1})+u(x,t_n)\right)/{2}$.

We shall measure the error in the following norm
\begin{equation}\label{EnNorm}
u \mapsto \max_{t\in[0,T]}\left(\left\|\cfrac{\partial u}{\partial t} (t)\right\|_{L^2(\Omega)}^{2}+\left\vert u(t)\right\vert^{2}_{H^1(\Omega)}\right) ^{1/2}.
\end{equation}
Here and in what follows, we use  the notations $u(t)$ and $\cfrac{\partial u}{\partial t} (t)$ as a shorthand for, respectively, $u(\cdot,t)$ and $\cfrac{\partial u}{\partial t} (\cdot,t)$. The norms and semi-norms in Sobolev spaces $H^k(\Omega)$ are denoted, respectively, by $\|\cdot\|_{H^k(\Omega)}$ and $|\cdot|_{H^k(\Omega)}$.
We call (\ref{EnNorm}) the energy norm referring to the underlying physics of the studied phenomenon. Indeed, the first term in (\ref{EnNorm}) may be assimilated to the kinetic energy and the second one to the potential energy.

Let us introduce $e^n_u = u^n_h - \Pi_h u^n$ and $e^n_v = v^n_h - I_h v^n$ where
$\Pi_h : H^1_0 (\Omega) \to V_h$ is the $H^1_0$-orthogonal projection
operator, i.e.
\begin{equation}\label{Pih}
 \left(\nabla \Pi_h v, \nabla\varphi_h\right) = \left(\nabla v,
   \nabla\varphi_h\right), \hspace{1em} \forall v \in H^1_0 (\Omega),\hspace{1em}\forall \varphi_h \in V_h 
\end{equation}
and $\tilde I_h : H^1_0 (\Omega) \to V_h$ is a Cl{\'e}ment-type interpolation operator which is also a projection, i.e. $\tilde I_hV_h=V_h$ \cite{ErnGue, ScoZh}. 

Let us recall, for future reference, the well known properties of these operators (see  \cite{ErnGue}): for every sufficiently smooth function $v$ the following inequalities hold
\begin{equation}\label{Pinterp}
  | \Pi_h v |_{H^1(\Omega)}\leq | v |_{H^1(\Omega)}, 
  \hspace{1em} | v - \Pi_h v |_{H^1(\Omega)} \leq Ch | v |_{H^2(\Omega)}
\end{equation}
with a constant $C > 0$ which depends only on the regularity of the mesh.
Moreover, for all $K\in\Th$ and  $E \in \mathcal{E}_h$ we have 
\begin{equation}\label{Clement}
 \| v - I_h v \|_{L^2(K)} \leq Ch_K |v |_{H^1(\omega_K)}
\text{ and }
\| v - I_h v \|_{L^2(E)} \leq Ch_{E}^{1 / 2} | v |_{H^1(\omega_{E})}
\end{equation}
Here $\omega_K$  (resp. $\omega_E$) represents the set of triangles of $\mathcal{T}_h$ having a common vertex with triangle $K$ (resp. edge $E$) and the constant $C > 0$ depends only on the regularity of the mesh.
\section{The 3-point time error estimator}\label{sectionRecall} 

The aim of this section is to recall \textit{a posteriori} bounds in time and space from \cite{our1} for the error measured in the norm (\ref{EnNorm}). Their derivation is based on the following piecewise quadratic (in time) 3-point reconstruction of the discrete solution.  
\begin{definition}\label{QuadRec}
Let $u^n_h$ be the discrete solution given by the scheme (\ref{Newm2}).  Then, the piecewise quadratic reconstruction $\tilde{u}_{h\tau} (t) : [0, T] \rightarrow V_h$ is constructed as the continuous in time function that is equal on $[t_n, t_{n + 1}]$, $n\ge 1$, to the quadratic polynomial in $t$ that coincides with $u^{n + 1}_h$ (respectively $u^n_h$, $u^{n - 1}_h$) at time $t_{n+1}$ (respectively $t_n$, $t_{n - 1}$).  Moreover, $\tilde{u}_{h\tau} (t)$ is defined on $[t_0, t_{1}]$ as the quadratic polynomial in $t$ that coincides with $u^{2}_h$ (respectively $u^1_h$, $u^{0}_h$) at time $t_{2}$ (respectively $t_1$, $t_{0}$). Similarly, we introduce  piecewise quadratic reconstruction $\tilde{v}_{h\tau} (t) : [0, T] \rightarrow V_h$ based on $v^n_h$ defined by (\ref{vhform}) and $\tilde{f}_{\tau} (t) : [0, T] \rightarrow L^2(\Omega)$ based on $f(t_n,\cdot)$.  
\end{definition}
The quadratic reconstructions $\tilde{u}_{h\tau}$, $\tilde{v}_{h\tau}$ are thus based on three points in time (normally looking backwards in time, with the exemption of the initial time slab $[t_0,t_1]$). This is also the case for the time error estimator (\ref{in}), recalled in the following Theorem and therefore referred to as the $3$-point estimator.

\begin{theorem}\label{lemest3}
The following \textit{a posteriori} error estimate holds between the solution $u$ of the wave equation (\ref{wave}) and the discrete solution $u_h^n$ given by (\ref{Newm1})--(\ref{Newm2}) for all $ t_n,~0\leq n\leq N$ with $v_h^n$ given by (\ref{vhform}):
\begin{multline}
\left(\left\Vert v^{n}_h- \cfrac{\partial u}{\partial t} (t_n)\right\Vert_{L^2(\Omega)} ^{2}+\left\vert u^{n}_h-u(t_{n})\right\vert ^{2}_{H^1(\Omega)}\right) ^{1/2}\\
\leq\left(\left\Vert v^{0}_h-v_0\right\Vert_{L^2(\Omega)} ^{2}+\left\vert u^{0}_h-u_0\right\vert ^{2}_{H^1(\Omega)}\right) ^{1/2} \\
+\eta _{S}(t_{n})+\sum_{k=1}^{n}\tau_{k-1}\eta _{T}(t_{k-1})
+\int_0^{t_n} \|f-\tilde{f}_\tau\|_{L^2(\Omega)}dt
\label{estf}
\end{multline}
where the space indicator is defined by
\begin{align}
  \eta_S (t_n)
  &= C_1 \max_{0 \leqslant t \leqslant t_n} \Biggl[ \sum_{K \in \mathcal{T}_h}
   h_K^2  \left\Vert \frac{\partial \tilde{v}_{h \tau}}{\partial t} - \Delta \tilde{u}_{h \tau}-f
   \right\Vert_{L^2(K)}^2\label{space}\\
   \notag&~~~~~~~~~~~~~~~~~~~~~~~~~~~~~~~~~~~~~~~~~~~~~+ \sum_{
E \in \mathcal{E}_h}h_{E} \left|\left[n \cdot \nabla \tilde{u}_{h
   \tau}\right]\right|_{L^2(E)}^2 \Biggl]^{1/2}
\\
\notag &+ C_2\sum_{m = 0}^{n-1} \int_{t_m}^{t_{m + 1}} \Biggl[ \sum_{K \in \mathcal{T}_h} h_K^2
  \left\Vert \frac{\partial^2 \tilde{v}_{h \tau}}{\partial t^2} - \Delta \frac{\partial
  \tilde{u}_{h\tau}}{\partial t} -\frac{\partial{f}}{\partial{t}}\right\Vert_{L^2(K)}^2\\
  \notag &~~~~~~~~~~~~~~~~~~~~~~~~~~~~~~~~~~~~~~~~+ \sum_{
E \in \mathcal{E}_h}h_{E} \left\Vert\left[n \cdot
  \nabla \frac{\partial \tilde{u}_{h\tau}}{\partial t}\right]\right\Vert_{L^2(E)}^2
  \Biggr]^{1/2}dt,
\end{align}
here  $C_1,~C_2$ are constants depending only on the mesh regularity,
$[\cdot]$ stands for a jump on an edge $E\in\mathcal{E}_h$, and 
$\tilde{u}_{h\tau}$, $\tilde{v}_{h\tau}$ are given by Definition \ref{QuadRec}. 

The error indicator in time for $k=1,\dots,N-1$ is
\begin{equation}\label{in}
\eta_T (t_k)=\left(\frac{1}{12}\tau_{k}^2+\frac{1}{8}\tau_{k-1}\tau_{k}\right)\left(\left\vert\partial _{k}^{2}{v_h}\right\vert_{H^1(\Omega)}
+ \left\Vert \partial _{k}^{2}{f_h} - z_h^k\right\Vert_{L^2(\Omega)}^2\right)^{1/2}
\end{equation}
where $z^k_h$ is such that
\begin{equation}\label{zh}
\left(z_h^k, \varphi_h\right) =(\nabla \partial _{k}^{2}{u_h}, \nabla\varphi_h),
\quad\forall\varphi_h\in V_h
\end{equation}
and
\begin{equation}\label{in0step}
\eta_T (t_0)=\left(\frac{5}{12}\tau_{0}^2+\frac{1}{2}\tau_{1}\tau_{0}\right)\left(\left\vert\partial _{1}^{2}{v_h}\right\vert_{H^1(\Omega)}
+ \left\Vert \partial _{1}^{2}{f_h} - z^1_h\right\Vert_{L^2(\Omega)}^2\right)^{1/2}
\end{equation}
\end{theorem}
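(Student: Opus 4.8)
The plan is to follow the Crank-Nicolson reformulation (\ref{CNh1})--(\ref{CNh2}) and derive an error equation for the reconstructed solution against the continuous weak form (\ref{weakwave}). First I would set $e = u - \tilde u_{h\tau}$ and $\dot e = \partial u/\partial t - \tilde v_{h\tau}$ and write an evolution equation for the pair $(e,\dot e)$, using that $\tilde u_{h\tau}$ and $\tilde v_{h\tau}$ are piecewise quadratic interpolants of the nodal values $u_h^n$, $v_h^n$. The key algebraic fact is that on each slab $[t_n,t_{n+1}]$ the derivatives of the quadratic reconstruction at the midpoint reproduce the centered finite differences appearing in (\ref{CNh1})--(\ref{CNh2}), so that $\partial \tilde u_{h\tau}/\partial t - \tilde v_{h\tau}$ and the residual of the second equation are, up to quadrature and interpolation remainders, the Crank-Nicolson residuals, which vanish by (\ref{CNh1})--(\ref{CNh2}) tested against $\varphi_h\in V_h$. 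Taylor expansion with integral remainder of the quadratic interpolation error then produces the explicit weights $\frac{1}{12}\tau_k^2+\frac18\tau_{k-1}\tau_k$ on interior slabs and the modified weight $\frac{5}{12}\tau_0^2+\frac12\tau_1\tau_0$ on the first slab, where the reconstruction looks forward.

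Next I would perform the standard energy argument: test the error equation for $(e,\dot e)$ with $\dot e$ (respectively $-\Delta e$ in the weak sense, i.e. pair the second component with $e$ in $H^1$), add, and integrate in time to get $\frac{d}{dt}\big(\|\dot e\|_{L^2}^2+|e|_{H^1}^2\big)$ bounded by the duality pairing of the residuals with $\dot e$ and $e$. The spatial part of the residual is handled exactly as in standard residual-type a posteriori estimates: insert the Cl\'ement interpolant $\tilde I_h$, use its local approximation properties (\ref{Clement}), integrate by parts elementwise to expose the elementwise residual $\partial \tilde v_{h\tau}/\partial t - \Delta\tilde u_{h\tau} - f$ and the edge jumps $[n\cdot\nabla\tilde u_{h\tau}]$, which is what gives $\eta_S$; one derivative in time of the same manipulation gives the second, time-integrated contribution to $\eta_S$. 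The temporal part of the residual, coming from the quadratic interpolation remainder, is what gets bounded by $\sum_k \tau_{k-1}\eta_T(t_{k-1})$: here one uses that the remainder is proportional to $\partial^2_k v_h$ in $H^1$ and to $\partial^2_k f_h - z_h^k$ in $L^2$, with $z_h^k$ defined by (\ref{zh}) precisely so as to represent $\Delta \partial^2_k u_h$ discretely — this is the step where the "3-point" structure is visible. Finally the consistency error between $f$ and its piecewise-linear-in-time reconstruction $\tilde f_\tau$ contributes the term $\int_0^{t_n}\|f-\tilde f_\tau\|_{L^2}\,dt$, and the initial data mismatch contributes the first term on the right of (\ref{estf}); a Gronwall-free argument works because the wave energy estimate has no exponential factor.

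The main obstacle I anticipate is the careful bookkeeping of the quadratic reconstruction near $t_0$: since $\tilde u_{h\tau}$ on $[t_0,t_1]$ is built from $u_h^0,u_h^1,u_h^2$ (forward-looking), the residual identities and the interpolation remainder there are not the same as on interior slabs, which is exactly why the weight in (\ref{in0step}) differs from (\ref{in}) and why the special first step (\ref{Newm1}) must be used rather than (\ref{Newm2}). A secondary technical point is justifying the elementwise integration by parts and the duality pairings at the level of regularity available ($\partial^2 u/\partial t^2\in C^0(H^{-1})$), which forces one to keep the argument in the weak form throughout and to use density; and one must track that all constants depend only on mesh regularity in space and on the ratios $\tau_n/\tau_{n-1}$ in time, as claimed. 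Since the bulk of this is carried out in \cite{our1}, I expect the proof here to be organized as a reduction to that analysis, emphasizing only the slab-wise identities that connect the quadratic reconstruction to the Crank-Nicolson residuals.
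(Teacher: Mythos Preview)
Your proposal is correct and follows essentially the same route as the paper (which in fact only \emph{recalls} this theorem from \cite{our1}; the argument is visible in the present paper only through the proof of Theorem~\ref{lemest5}, which explicitly ``reproduces the proof of Theorem~\ref{lemest3}''). The vector reformulation of (\ref{syst}), the residual equation for the quadratic reconstruction, the energy identity via the skew-symmetry $(\mathcal{A}\nabla E,\nabla E)=0$, the integration by parts in time producing the two pieces of $\eta_S$, and the bound $\int_{t_m}^{t_{m+1}}|p_m|\,dt\le \tfrac{1}{12}\tau_m^3+\tfrac18\tau_{m-1}\tau_m^2$ giving the time weights are exactly what you describe.

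Two small points of precision. First, $\tilde f_\tau$ is piecewise \emph{quadratic} (Definition~\ref{QuadRec}), not piecewise linear. Second, and more substantively, the paper does not use only the Cl\'ement interpolant: the test function is $\Phi_h=(\Pi_h E_u,\ \tilde I_h E_v)$, with the $H^1_0$-orthogonal projection $\Pi_h$ on the first component. This choice is what kills the terms $(\nabla\partial_t\tilde u_{h\tau},\nabla(E_u-\Pi_hE_u))$ and $(\nabla\tilde v_{h\tau},\nabla(E_u-\Pi_hE_u))$ exactly; using $\tilde I_h$ on both components would leave uncontrolled residuals in $H^1$. Since you already defer the details to \cite{our1}, this is not a gap in your plan, but it is the one non-obvious technical choice you should flag.
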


We also recall the optimality result for the $3$-point time error estimator.
\begin{theorem}\label{optimality}
Let u be the solution of wave equation (\ref{wave}) and 
$\displaystyle\cfrac{\partial^{3}u}{\partial t^{3}}(0)\in {H^1(\Omega)}$, $\displaystyle\cfrac{\partial^{2}u}{\partial t^{2}}(0) \in {H^2(\Omega)}$, $\displaystyle\cfrac{\partial^{2}f}{\partial t^{2}}(t)\in L^{\infty}(0,T;{L^2(\Omega)})$, $\displaystyle\cfrac{\partial^{3}f}{\partial t^{3}}(t)\in L^{2}(0,T;{L^2(\Omega)})$. Suppose that mesh $\Th$ is quasi-uniform, the mesh in time is uniform ($t_k=k\tau$), and the initial approximations are chosen as
\begin{equation}\label{initPi}
u_h^0=\Pi_hu_0, \quad v_h^0=\Pi_hv_0 .
\end{equation}
Then, the 3-point time error estimator $\eta_T(t_k)$ defined by (\ref{in}, \ref{in0step}) is of order $\tau^2$, i.e. 
\begin{equation*}
\eta_T(t_k)\leq C \tau^2.
\end{equation*}
with a positive constant $C$ depending only on $u$, $f$, and the regularity of mesh $\Th$.
\end{theorem}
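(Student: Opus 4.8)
The plan is to strip the (uniform) time step off the prefactors and bound what remains. Since $t_k=k\tau$ one has $\tfrac1{12}\tau_k^2+\tfrac18\tau_{k-1}\tau_k=\tfrac5{24}\tau^2$ and $\tfrac5{12}\tau_0^2+\tfrac12\tau_1\tau_0=\tfrac{11}{12}\tau^2$, and the amplitude factors in $\eta_T(t_0)$ and $\eta_T(t_1)$ coincide (both rest on the second difference centred at $t_1$); so it suffices to show that
\[
  A_k:=\bigl|\partial_k^2 v_h\bigr|_{H^1(\Omega)}+\bigl\|\partial_k^2 f_h-z_h^k\bigr\|_{L^2(\Omega)}^2\le C(u,f,\Th)
\]
uniformly in $\tau$, $h$ and $k=1,\dots,N-1$. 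Introducing the discrete Laplacian $\Delta_h\colon V_h\to V_h$, $(-\Delta_h w_h,\varphi_h)=(\nabla w_h,\nabla\varphi_h)$ for all $\varphi_h\in V_h$, definition (\ref{zh}) reads $z_h^k=-\Delta_h\partial_k^2 u_h$, so, since on a uniform mesh $\partial_k^2$ commutes with $\Delta_h$ and with the finite element approximation $f_h$ of $f$,
\[
  \partial_k^2 f_h-z_h^k=\partial_k^2\rho_h,\qquad \rho_h^n:=f_h^n+\Delta_h u_h^n .
\]
Because $f(t_n)+\Delta u(t_n)=\partial_{tt}u(t_n)$ by the wave equation, $\rho_h^n$ is a consistent approximation of $\partial_{tt}u(t_n)$; thus $\partial_k^2 v_h$ should behave like $\partial_{ttt}u(t_k)$ and $\partial_k^2\rho_h$ like $\partial_t^4 u(t_k)$, and the task is reduced to bounding second time differences of $v_h$ in $H^1$ and of $\rho_h$ in $L^2$.

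The key structural observation I would use is that, thanks to (\ref{CNh1})--(\ref{CNh2}) and (\ref{vhform}), the pair $(v_h^n,\rho_h^n)$ is again a Crank--Nicolson/Newmark solution: rewriting (\ref{CNh2}) with $\Delta_h$ gives $\partial_{n+1/2}v_h=\rho_h^{n+1/2}$ (the discrete $\partial_t v=\partial_{tt}u$), and differencing $\rho_h$ and using (\ref{CNh1}) gives $\partial_{n+1/2}\rho_h=(\Delta_h v_h)^{n+1/2}+\partial_{n+1/2}f_h$ (the discrete $\partial_{tt}v-\Delta v=\partial_t f$, with a right-hand side that is an $O(\tau^2)$ perturbation of the ideal one). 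Hence $(v_h^n,\rho_h^n)$ is the scheme (\ref{CNh1})--(\ref{CNh2}) for $v=\partial_t u$, with $v(0)=v_0$, $\partial_t v(0)=\Delta u_0+f(0)$, starting values $v_h^0=\Pi_h v_0$ and $\rho_h^0=f_h^0+\Delta_h\Pi_h u_0$ provided by (\ref{initPi}) and (\ref{Newm1}), and perturbed source. Applying the constant-step operator $\partial^2$ once more, $(\partial_n^2 v_h,\partial_n^2\rho_h)_{n\ge1}$ is in turn the Newmark solution of the wave equation satisfied by $\partial_{tt}v=\partial_{ttt}u$, with right-hand side a consistent approximation of $\partial_{ttt}f$ and starting data $(\partial_1^2 v_h,\partial_1^2\rho_h)$ involving $u_h^0,u_h^1,u_h^2$.

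These are exactly the data covered by the hypotheses. Differentiating (\ref{weakwave}) three times in time, $W:=\partial_{ttt}u$ solves the wave equation (\ref{wave}) with source $\partial_{ttt}f\in L^2(0,T;L^2)$, $W(0)=\partial_{ttt}u(0)\in H^1_0(\Omega)$ and $\partial_t W(0)=\partial_t^4 u(0)=\Delta\partial_{tt}u(0)+\partial_{tt}f(0)\in L^2(\Omega)$ (here $\partial_{tt}u(0)\in H^2$ is used, together with the fact that $\partial_{ttt}f\in L^2(0,T;L^2)$ forces $\partial_{tt}f\in C([0,T];L^2)$). The standard energy estimate for (\ref{wave}) then gives $\partial_{ttt}u\in L^\infty(0,T;H^1)$ and $\partial_t^4 u\in L^\infty(0,T;L^2)$; feeding this back into the equation and using elliptic regularity yields $\partial_{tt}u,\ \partial_t u\in L^\infty(0,T;H^2)$. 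No spatial smoothness of $f$ is needed: every spatial derivative that would fall on $f$ is absorbed into $\Delta u$ and controlled through $\partial_{tt}u$ — which is exactly why one must estimate the combination $\rho_h=f_h+\Delta_h u_h$ rather than $f_h$ and $\Delta_h u_h$ separately.

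It then remains to invoke a stability / a priori error estimate for the Newmark scheme applied to the $\partial_{ttt}u$-problem: it bounds $\max_{1\le n\le N-1}\bigl(|\partial_n^2 v_h|_{H^1}^2+\|\partial_n^2\rho_h\|_{L^2}^2\bigr)$ by the energy of the starting data at $n=1$ plus the accumulated source, the latter being controlled by $\sqrt{T}\,\|\partial_{ttt}f\|_{L^2(0,T;L^2)}$ up to harmless $O(\tau^2)$ contributions; together with the regularity above and the interpolation bounds (\ref{Pinterp})--(\ref{Clement}) this gives $A_k\le C$, hence $\eta_T(t_k)\le C\tau^2$. I expect the genuine obstacle to lie in the control of the starting data $\bigl|\partial_1^2 v_h\bigr|_{H^1}$ and $\bigl\|\partial_1^2\rho_h\bigr\|_{L^2}$: these involve $u_h^2$, hence two applications of the scheme including the nonstandard step (\ref{Newm1}), and a crude bound through the per-step local errors would introduce an inadmissible factor $\tau^{-2}$ (or, via inverse inequalities, $h^{-2}$). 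One has to keep the cancellation visible, e.g. $\partial_1^2 v_h=\Pi_h\partial_1^2 v+\partial_1^2\theta$ with $\theta^n:=v_h^n-\Pi_h v(t_n)$ and $\theta^0=0$ (here (\ref{initPi}) is used), the first term bounded by $\|\partial_{ttt}u\|_{L^\infty(0,T;H^1)}$ and the second by $\tau^{-2}\bigl(|\theta^2|_{H^1}+2|\theta^1|_{H^1}\bigr)$, which is finite once one has an a priori estimate $|v_h^n-\Pi_h v(t_n)|_{H^1}=O(\tau^2+h)$ for the $v$-wave equation (with $v$ in the role of the displacement), and analogously for $\rho_h$ via $\|\rho_h^n-\Pi_h\partial_{tt}u(t_n)\|_{L^2}=O(\tau^2+h)$; establishing these on the first slab, where $\Delta_h u_h^1$ must be balanced against the discretised source in (\ref{Newm1}), is the delicate point, and it is also the only place where quasi-uniformity of $\Th$ is really needed. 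The remaining ingredients — bulk discrete stability, consistency of the perturbed source, and the interpolation estimates — are routine.
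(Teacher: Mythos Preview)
The paper does not prove Theorem~\ref{optimality}: the whole of Section~\ref{sectionRecall} is a recap of results from~\cite{our1}, and this theorem is explicitly introduced with ``We also recall the optimality result for the 3-point time error estimator.'' No argument is given here; the result is simply quoted and later invoked in the short proof of the 5-point optimality theorem. So there is no ``paper's own proof'' to compare your proposal against.

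That said, your outline is the natural strategy and matches the spirit of how such bounds are obtained in~\cite{our1}: peel off the $\tau^2$ prefactor, rewrite $\partial_k^2 f_h - z_h^k$ as $\partial_k^2(f_h+\Delta_h u_h)$, observe that differencing the scheme (\ref{CNh1})--(\ref{CNh2}) shows the differenced unknowns again satisfy a Crank--Nicolson/Newmark system for the time-differentiated wave equation, and then invoke discrete energy stability. Your identification of the genuine obstacle --- controlling the start-up values $\partial_1^2 v_h$ and $\partial_1^2\rho_h$, which depend on the nonstandard first step (\ref{Newm1}) and on the choice (\ref{initPi}) --- is exactly right; this is why Remark~\ref{intImp} insists on the projection initial data, and it is where quasi-uniformity (through inverse inequalities) enters. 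One caution: your proposed remedy at the end, bounding $|\partial_1^2\theta|_{H^1}$ by $\tau^{-2}$ times an a~priori error of size $O(\tau^2+h)$, would leave an $O(h/\tau^2)$ term, which is not uniformly bounded unless $\tau\gtrsim h^{1/2}$; the actual argument must exploit further cancellation in $\theta^2-2\theta^1+\theta^0$ rather than treating the three terms separately.
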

\begin{remark}\label{intImp}
Note that the approximation of initial conditions (\ref{initPi}) is crucial for optimality of our time and space error estimators. As explained theoretically and demonstrated numerically in \cite{our1}, this is due to the boundedness of higher order discrete derivatives that contains in time and space error estimators (\ref{in}, \ref{in0step}, \ref{space}). 
\end{remark}

\section{The 5-point \textit{a posteriori} error estimator}\label{section3}

As already mentioned in the Introduction, the time error estimator (\ref{in}) contains a finite element approximation to the Laplacian of $u_h^k$, i.e. $z_h^k$ given by (\ref{zh}). This is unfortunate because $z_h^k$ should be computed by solving an additional finite element problem that implies additional computational
effort. Having in mind that the term $\partial_n^2 f_h - z_h^n$ in (\ref{in}) is
a discretization of ${\partial^2 f}/{\partial t^2} + \Delta u = {\partial^4 u}/{\partial t^4}$ at time $t_n$  our goal now is to avoid the second derivatives in space in the error estimates and replace them with the forth derivatives in time. 

We introduce a ``fourth order finite difference in time''  $\partial_n^4$ for any $w_h^n\in V_h$, $k=0,1,\ldots$ as
\begin{equation}\label{newd4}
\partial_n^4 w_h = \hat{\partial}^2_n \partial^2_{} w_h 
\end{equation}
where 
\begin{align}\label{newd2}
\hat{\partial}^2_n w_h &= \frac{2}{(\hat{t}_n - \hat{t}_{n - 2})}\left(\frac{w_h^n - w_h^{n - 1}}{\hat{t}_n - \hat{t}_{n
   - 1}} - \frac{w_h^{n - 1} - w_h^{n - 2}}{\hat{t}_{n - 1} - \hat{t}_{n -
   2}}\right),\\[1mm]
\notag\hat{t}_n &= \frac{t_{n + 1} + t_{n - 1}}{2}
\end{align}
and $\partial^2 w_h$ stands for the collection of $\partial^2_k w_h$ with $k=1,2,\ldots$, so that the notation $\hat{\partial}^2_n \partial^2 w_h$ means that one should substitute $\partial^2_k w_h$ for $w^k_h$, $k=n,n-1,n-2$, in the definition of $\hat{\partial}^2_n w_h$.
In particular,
$\partial_n^4 w_h$ is indeed the standard fourth order finite difference in
time at $t_{n-1}$ of $w_h$ in the case of constant time steps $\tau_n = \tau$:
\[ \partial^4_n w_h = \frac{w_h^{n + 1} - 4 w_h^n + 6 w_h^{n - 1} - 4 w_h^{n -
   2} + w_h^{n - 3}}{\tau^4} \]
However, $\partial^4_n$ is not necessarily consistent with the fourth time
derivative in the general case $\tau_n \neq \text{const}$. 

We denote 
\begin{equation}\label{bar}
\bar{w}_h^n = \frac{\tau_n (w_h^{n + 1}+ w_h^n) + \tau_{n - 1} (w_h^n + w_h^{n - 1})}{4\tau_{n-1/2}}.
\end{equation}
Consistently with the conventions above, $\bar{w}_h$ will stand for the collection of $\bar{w}_h^k$, $k=0,1,\ldots$

We shall need a connection between second order discrete derivatives $\hat{\partial}^2_n$ and $\partial^2_n$  which we establish in the following technical lemma.
\begin{lemma}\label{lemtech1}
For all integer  $n=3,\ldots N-1$ there exist coefficients $\alpha_k$, $k=n-2,n-1,n$ such that for all $\{w_h^n\}$
\begin{equation}
 \label{sumalpha} \hat{\partial}^2_n  \bar{w}_h = \sum_{k = n - 2}^n \alpha_k \partial_k^2
   w_h
\end{equation}
Moreover 
$$|\alpha_k|\leq c, \mbox{ for }k=n-2,n-1,n,  \mbox{ and }\displaystyle\sum_{k = n - 2}^n \alpha_k \ge C$$ 
where $c$ and $C$ are positive constants depending only on the mesh regularity in time, i.e. on $\max_{k\ge 0}\left( \frac{\tau_{k+1}}{\tau_k} + \frac{\tau_k}{\tau_{k+1}} \right)$.
\end{lemma}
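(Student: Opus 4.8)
The plan is to expand both sides of (\ref{sumalpha}) in terms of the nodal values $w_h^{n-3},\ldots,w_h^{n+1}$ and match coefficients, then estimate the resulting expressions using only the time-regularity ratio. First I would write out $\bar w_h^k$ from (\ref{bar}) for $k=n-2,n-1,n$, so that $\hat\partial^2_n\bar w_h$ becomes an explicit linear combination of $w_h^{n-3},\ldots,w_h^{n+1}$ with coefficients that are rational functions of the $\tau_j$'s. In parallel I would write $\partial^2_k w_h$ from (\ref{notation}) for the same three indices $k=n-2,n-1,n$; note that $\partial^2_{n-2}w_h$ involves $w_h^{n-3},w_h^{n-2},w_h^{n-1}$, that $\partial^2_{n-1}w_h$ involves $w_h^{n-2},w_h^{n-1},w_h^{n}$, and $\partial^2_{n}w_h$ involves $w_h^{n-1},w_h^{n},w_h^{n+1}$. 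The key structural observation is that $\bar w_h^k$ is, up to the factor $1/\tau_{k-1/2}$, precisely the combination appearing on the left-hand side of the Newmark scheme (\ref{Newm2}), and more usefully that $\bar w_h^k - w_h^k$ is proportional to $\partial^2_k w_h$ (a direct computation from (\ref{bar}) gives $\bar w_h^k = w_h^k + \tfrac{\tau_k\tau_{k-1}}{4}\,\partial^2_k w_h$, or a similar identity). Hence $\hat\partial^2_n\bar w_h = \hat\partial^2_n w_h + \hat\partial^2_n\big(\tfrac{\tau\tau}{4}\partial^2 w_h\big)$; combined with the fact (consequence of the constant-step identity displayed before the lemma) that $\hat\partial^2_n w_h$ itself is a combination of $\partial^2_{n-1}w_h$ and the other two second differences, this already exhibits the right-hand side as $\sum_{k=n-2}^n\alpha_k\partial^2_k w_h$.

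The existence of the $\alpha_k$ can in fact be argued more cleanly without the explicit formulas: the map $\{w_h^k\}_{k=n-3}^{n+1}\mapsto\hat\partial^2_n\bar w_h$ is a linear functional that annihilates all quadratic polynomials in $t$ (since $\bar w_h$ reproduces the affine-in-the-nodal-values structure and $\hat\partial^2_n$ kills quadratics — or simply because $\bar w_h^k$ differs from a nodal sampling only by a multiple of a second difference, and second differences of quadratics are constant while $\hat\partial^2_n$ of a constant is zero). Therefore it lies in the span of $\{\partial^2_{n-2},\partial^2_{n-1},\partial^2_n\}$ applied to $w_h$, which is exactly the space of linear functionals on $5$ nodal values vanishing on quadratics that use only the stencil $[t_{n-3},t_{n+1}]$ — this span is $2$-dimensional short of the full... more carefully, I would just produce $\alpha_k$ explicitly from the expansion above, since that is what the quantitative bounds require anyway.

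For the bounds, once $\alpha_{n-2},\alpha_{n-1},\alpha_n$ are written as explicit rational functions of $\tau_{n-2},\tau_{n-1},\tau_n$, homogeneity shows each $\alpha_k$ is a function of the two ratios $r_1=\tau_{n-1}/\tau_{n-2}$, $r_2=\tau_n/\tau_{n-1}$ only; since these ratios lie in a compact interval $[\rho^{-1},\rho]$ with $\rho$ controlled by $\max_k(\tau_{k+1}/\tau_k+\tau_k/\tau_{k+1})$, and the denominators are bounded away from zero there (all are products of sums of positive $\tau$'s, with no cancellation), continuity gives $|\alpha_k|\le c$. The lower bound $\sum_k\alpha_k\ge C>0$ I would get by evaluating $\hat\partial^2_n\bar w_h$ on the test sequence $w_h^k = \tfrac12(t_k-\hat t_{n-1})^2$ (a fixed quadratic up to lower order won't work since those give zero); instead take a cubic, or more simply observe that $\sum_k\alpha_k$ is what $\hat\partial^2_n\bar w_h$ produces when $\partial^2_k w_h\equiv 1$ for each $k$ — but a sequence with all three second differences equal to $1$ is a quadratic-plus-correction whose image can be computed directly and shown positive, uniformly in the ratios, again by compactness. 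The main obstacle is purely bookkeeping: carrying the rational expressions for $\alpha_k$ through without error and verifying the denominators never vanish on the admissible ratio range; the positivity of $\sum\alpha_k$ is the one place where a sign has to be checked rather than just a bound, so I would isolate that computation and present it as the crux of the lemma.
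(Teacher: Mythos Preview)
Your proposal contains two related errors that undermine the clean abstract argument you want, and the paper's approach avoids both by choosing a clever test basis.

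\medskip
\textbf{The identity you claim is false.} You write $\bar w_h^k = w_h^k + \tfrac{\tau_k\tau_{k-1}}{4}\partial^2_k w_h$. On a non-uniform mesh this fails already for an affine $w(t)=t$: then $\partial^2_k w_h=0$, but a direct computation from (\ref{bar}) gives $\bar w_h^k - w_h^k = \tfrac{1}{2}(\tau_k-\tau_{k-1})\ne 0$. In fact $\bar w_h^k - w_h^k$ is \emph{not} proportional to $\partial^2_k w_h$ for variable steps, so the decomposition $\hat\partial^2_n\bar w_h = \hat\partial^2_n w_h + \hat\partial^2_n(\text{const}\cdot\partial^2 w_h)$ you sketch does not hold.

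\medskip
\textbf{The annihilation claim is false and self-defeating.} You assert that $\{w_h^k\}\mapsto\hat\partial^2_n\bar w_h$ kills all \emph{quadratic} polynomials. It does not: the paper computes explicitly that for $s(t)=t^2/2$ one gets
\[
\hat\partial^2_n\bar s = 1 + \frac{\tau_n-\tau_{n-1}-\tau_{n-2}+\tau_{n-3}}{\tau_n+\tau_{n-1}+\tau_{n-2}+\tau_{n-3}} \neq 0,
\]
and this value is precisely $\sum_k\alpha_k$ (since $\partial^2_k s\equiv 1$). So if your annihilation claim were true, the lower bound $\sum_k\alpha_k\ge C>0$ would be impossible. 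The correct statement is that both $\hat\partial^2_n\bar w_h$ and each $\partial^2_k w_h$ annihilate \emph{affine} sequences $w^j=a+bt_j$; the span of $\{\partial^2_{n-2},\partial^2_{n-1},\partial^2_n\}$ is then exactly the $3$-dimensional annihilator of affines in $(\mathbb{R}^5)^*$, which is the dimension count that actually works. You nearly say this when you mention a quadratic test function for the lower bound, then talk yourself out of it (``won't work since those give zero'') because of the earlier error.

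\medskip
\textbf{What the paper does instead.} The paper avoids the bookkeeping you anticipate by testing against the piecewise-linear hat functions $\phi_{(k)}$, $k=n-3,\dots,n+1$, which form a basis of $\mathbb{R}^5$. The two extreme ones $\phi_{(n-3)},\phi_{(n+1)}$ are globally affine on the stencil, so both sides of (\ref{sumalpha}) vanish on them. The remaining three have the triangular property $\partial^2_j\phi_{(k)}=0$ for $j\ne k$ (each has its kink at only one interior node), so one reads off directly $\alpha_k = \hat\partial^2_n\bar\phi_{(k)}/\partial^2_k\phi_{(k)}$, whose boundedness follows by scaling. For the lower bound the paper uses exactly the quadratic $s(t)=t^2/2$ and the explicit formula above. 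Your fallback of ``just compute the $\alpha_k$ explicitly and invoke compactness'' would work in principle, but the hat-function basis is what makes that computation short.
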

\begin{proof}
We first note that relation (\ref{sumalpha}) does not contain any derivatives in space and thus it should hold at any point $x\in\Omega$. Consequently, it is sufficient to prove this Lemma assuming that $w^n_h$, $\partial^2_n w_h$, etc. are real numbers, i.e. replacing $V_h$ by $\mathbb{R}$. This is the assumption adopted in this proof. We shall thus drop the sub-indexes $h$ everywhere. Furthermore, it will be convenient to reinterpret $w^n$ in (\ref{newd4}), (\ref{newd2}) and (\ref{bar})  as the values of a real valued function $w(t)$ at $t=t_n$. We shall also use the notations like $\bar{w}^n$, $\partial_n^2 w$, and so on, where $w$ is a continuous function on $\mathbb{R}$, always assuming $w^n=w(t_n)$. 

Observe that $\hat{\partial}^2_n \bar{w}$ is a linear combination of 5 numbers $\{w^{n - 3}, \ldots, w^{n + 1} \}$. Thus, it is enough to check equality (\ref{sumalpha}) on any 5 continuous functions $\phi_{(k)}(t)$,  $k=n-3,\ldots,n+1$, such that the vector of values of $\phi_{(k)}$ at times $t_l$, $l=n-3,\ldots,n+1$, form a basis of $\mathbb{R}^5$. For fixed $n$, let us choose these functions as
\begin{align}
\label{LinFunct} \phi_{(k)}(t)&=\begin{cases}\displaystyle\cfrac{t-t_{k-1}}{\tau_{k-1}}, & \mbox{if}~ t<t_{k},\\
\displaystyle\cfrac{t_{k+1}-t}{\tau_{k}}, & \mbox{if}~ t\geq t_{k},
\end{cases}  
\qquad k=n-3,\ldots,n+1.
\end{align}

First we notice that for every linear function $u(t)$ on $\left[t_{n-3}, t_{n+1}\right]$ we have $\hat{\partial}^2_n 
\bar{u} = \partial_n^2 u = 0$. Thus, we get immediately $\hat{\partial}^2_n \bar{\phi}_{(n-3)} = \partial_n^2 \phi_{(n-3)} = 0$ and $\hat{\partial}^2_n \bar{\phi}_{(n+1)} = \partial_n^2 \phi_{(n+1)} = 0$ so that (\ref{sumalpha}) is fulfilled on functions $\phi_{(n-3)}$, $\phi_{(n+1)}$ with any coefficients $\alpha_k$, $k=n-2,n-1,n$. Now we want to provide coefficients $\alpha_k$, $k=n-2,n-1,n$ for which (\ref{sumalpha}) is fulfilled  on functions ${\phi}_{(n-2)}$, ${\phi}_{(n-1)}$ and ${\phi}_{(n)}$. For brevity, we demonstrate the idea only for function $\phi_{(n)}(t)$. Function $\phi_{(n)}(t)$ is linear on $\left[t_{n-3}, t_{n}\right]$ and thus
$$\partial_{n-2}^2 \phi_{(n)} = 0,~\partial_{n-1}^2 \phi_{(n)} = 0.$$
From direct computations it is easy to show that 
$$\partial_{n}^2 \phi_{(n)} \sim \frac{1}{\tau_n^2},~\bar{\phi}_{(n)} \sim 1,~\hat{\partial}^2_n \bar{\phi}_{(n)}\sim \frac{1}{\tau_n^2}$$ 
where $\sim$ hides some factors that can be bounded by constants depending only on the mesh regularity. Thus we are able to establish expression for coefficient $\alpha_{n}=\displaystyle\cfrac{\hat{\partial}^2_n \bar{\phi}_{(n)}}{\partial_{n}^2 \phi_{(n)}}\leq C$. Similar reasoning for function $\phi^{n-1}_n$ and $\phi^{n-2}_n$ shows that $\alpha_{n-1}=\displaystyle\cfrac{\hat{\partial}^2_n \bar{\phi}_{(n-1)}}{\partial_{n-1}^2 \phi_{(n-1)}}\leq C$ and  $\alpha_{n-2}=\displaystyle\cfrac{\hat{\partial}^2_n \bar{\phi}_{(n-2)}}{\partial_{n-2}^2 \phi_{(n-2)}}\leq C$.

The next step is to show boundedness from below of $\displaystyle\sum_{k = n - 2}^n \alpha_k$. We will show it by applying equality (\ref{sumalpha}) to second order polynomial function $s(t)=\displaystyle\frac{t^2}{2}$. 
Using a Taylor expansion of $s(t)$ around $\hat{t}_n$ in the definition of $\bar{s}^n$ gives
\begin{align*}
\notag \bar{s}^n&=\frac{\tau_n(\hat{t}_n^2+\hat{t}_n\tau_{n-1}+\displaystyle\frac{1}{4}(\tau_n^2+\tau_{n-1}^2)) + \tau_{n-1}(\hat{t}_n^2-\hat{t}_n\tau_{n}+\displaystyle\frac{1}{4}(\tau_n^2+\tau_{n-1}^2))}{2(\tau_n+\tau_{n-1})}
\\
&=\frac{\hat{t}_n^2}{2}+{\frac{1}{8}\left(\tau_n^2+\tau_{n-1}^2\right)}
\label{mid}
\end{align*}
Substituting this into the definition of $\hat{\partial}_n^2\bar s$  we obtain 
\begin{align*}
 \hat{\partial}_n^2\bar{s}= \displaystyle\frac{\displaystyle\frac{\bar{s}^n - \bar{s}^{n - 1}}{\hat{t}_n - \hat{t}_{n- 1}} - \frac{\bar{s}^{n - 1} - \bar{s}^{n - 2}}{\hat{t}_{n - 1} - \hat{t}_{n -
  2}}}{\left(\hat{t}_n - \hat{t}_{n - 2}\right) / 2}& = 1+{\frac{1}{8}}\left(\displaystyle\frac{2\displaystyle\frac{\tau_n^2 - \tau^2_{n-2}}{\tau_n + \tau_{n-2}} - 2\displaystyle\frac{\tau_{n-1}^2 - \tau^2_{n-3}}{\tau_{n-1} + \tau_{n-3}}}{\frac{1}{4}(\tau_n + \tau_{n-1} + \tau_{n-2} + \tau_{n-3})}\right)\\ 
\notag&=1+\displaystyle\cfrac{\tau_{n}-\tau_{n-1}-\tau_{n-2}+\tau_{n-3}}{\tau_{n}+\tau_{n-1}+\tau_{n-2}+\tau_{n-3}}
\end{align*}
Using (\ref{sumalpha}) and the fact that $\partial^2_ks=1$ for $k=n-2,n-1,n$ we note that 
\begin{equation*}
1+\displaystyle\cfrac{\tau_{n}-\tau_{n-1}-\tau_{n-2}+\tau_{n-3}}{\tau_{n}+\tau_{n-1}+\tau_{n-2}+\tau_{n-3}}=\sum_{k = n - 2}^n \alpha_k 
\end{equation*}
This implies $\displaystyle\sum_{k = n - 2}^n \alpha_k\geq C$. 
\end{proof}

\begin{lemma}\label{lemtech2}
Let $w_h^n, s_h^n\in V_h$ be such that  
\begin{equation}
\label{Nick} \frac{w_h^{n + 1} - w_h^n}{\tau_n} - \frac{s_h^n + s_h^{n +1}}{2} = 0,~ \forall n\geq0.
\end{equation}
For all $n\geq 3$ there exist coefficients $\beta_k$, $k=n-2,n-1,n$ such that
\begin{equation}
 \label{sumbeta} \sum_{k = n - 2}^n \alpha_k \partial_k^2
   w_h = \left(\sum_{k = n - 2}^n \alpha_k\right)\partial_n^2 w_h - \tau_n  \sum_{k = n - 2}^n \beta_k \partial_k^2s_h
\end{equation}
where coefficients $\alpha_k$, $k=n-2,n-1,n$ are introduced in Lemma \ref{lemtech1}. 
Moreover
$$|\beta_k|\leq C,~k=n-2,n-1,n$$
where $C$ is a positive constant depending only on the mesh regularity in time, i.e. on $\max_{k\ge 0}\left( \frac{\tau_{k+1}}{\tau_k} + \frac{\tau_k}{\tau_{k+1}} \right)$.
\end{lemma}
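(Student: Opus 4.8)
\emph{Proof proposal.} The plan is to follow the reduction used at the start of the proof of Lemma~\ref{lemtech1}: relation (\ref{sumbeta}) involves no spatial derivatives, so it suffices to prove it with $V_h$ replaced by $\mathbb{R}$, dropping the subscript $h$ and treating $w^k$, $s^k$ as real numbers linked by (\ref{Nick}). The whole argument hinges on one elementary identity. Inserting (\ref{Nick}) into the definition (\ref{notation}) of $\partial^2_k$ gives, for every $k\ge 1$,
\[
\partial^2_k w=\frac{1}{\tau_{k-1/2}}\Bigl(\tfrac{s^{k}+s^{k+1}}{2}-\tfrac{s^{k-1}+s^{k}}{2}\Bigr)=\frac{s^{k+1}-s^{k-1}}{2\tau_{k-1/2}}=\partial_k s ,
\]
i.e.\ the second difference of $w$ is the centred first difference of $s$. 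Hence the left-hand side of (\ref{sumbeta}) minus $\bigl(\sum_{k}\alpha_k\bigr)\partial^2_n w$ equals $\sum_{k=n-2}^{n}\alpha_k(\partial_k s-\partial_n s)$, where the $k=n$ term drops out. This quantity vanishes whenever $(s^k)$ is an affine function of $t_k$; since the kernel of the map $s\mapsto(\partial^2_{n-2}s,\partial^2_{n-1}s,\partial^2_n s)$ is exactly the space of affine sequences, the quantity must be a linear combination of $\partial^2_{n-2}s$, $\partial^2_{n-1}s$, $\partial^2_n s$. What remains is to write that combination out with the factor $\tau_n$ in front and coefficients bounded in terms of the mesh regularity.

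For that I would telescope explicitly. Writing each $\partial_k s$ as the convex combination $\tfrac{\tau_k\delta_k+\tau_{k-1}\delta_{k-1}}{\tau_k+\tau_{k-1}}$ of the slopes $\delta_j=(s^{j+1}-s^{j})/\tau_j$ and rearranging yields, for any $m$,
\[
\partial_{m-1}s-\partial_m s=-\frac{\tau_{m-2}}{2}\,\partial^2_{m-1}s-\frac{\tau_m}{2}\,\partial^2_m s .
\]
Using this with $m=n$ gives $\partial_{n-1}s-\partial_n s$, and summing the cases $m=n$ and $m=n-1$ gives $\partial_{n-2}s-\partial_n s=-\tfrac{\tau_{n-3}}{2}\partial^2_{n-2}s-\tfrac{\tau_{n-1}+\tau_{n-2}}{2}\partial^2_{n-1}s-\tfrac{\tau_n}{2}\partial^2_n s$. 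Substituting both into $\alpha_{n-2}(\partial_{n-2}s-\partial_n s)+\alpha_{n-1}(\partial_{n-1}s-\partial_n s)$ and collecting the coefficients of $\partial^2_{n-2}s$, $\partial^2_{n-1}s$, $\partial^2_n s$ produces precisely $-\tau_n\sum_{k=n-2}^{n}\beta_k\partial^2_k s$ with
\[
\beta_{n-2}=\frac{\tau_{n-3}}{2\tau_n}\,\alpha_{n-2},\qquad
\beta_{n-1}=\frac{\tau_{n-1}+\tau_{n-2}}{2\tau_n}\,\alpha_{n-2}+\frac{\tau_{n-2}}{2\tau_n}\,\alpha_{n-1},\qquad
\beta_{n}=\frac{\alpha_{n-2}+\alpha_{n-1}}{2},
\]
which, combined with the identity of the first paragraph, is exactly (\ref{sumbeta}).

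Finally, the bounds $|\beta_k|\le C$ are immediate: the ratios $\tau_{n-3}/\tau_n$, $\tau_{n-2}/\tau_n$, $\tau_{n-1}/\tau_n$ are controlled by the time-mesh regularity constant $\max_{k\ge 0}(\tau_{k+1}/\tau_k+\tau_k/\tau_{k+1})$, and $|\alpha_k|\le c$ by Lemma~\ref{lemtech1}. I do not expect any real obstacle here; the only point needing care is to keep the factor $\tau_n$ explicit all through the telescoping (rather than hiding it in ``$\sim$'' as in the proof of Lemma~\ref{lemtech1}) so that it can be pulled out in front of the $\beta_k$, and to match up index ranges so that $n\ge 3$ is exactly what is needed for every term to be defined. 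The rest is routine bookkeeping.
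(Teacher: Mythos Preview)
Your proof is correct. You share with the paper the two essential steps: the reduction to $V_h=\mathbb{R}$ and the key identity $\partial_k^2 w=\partial_k s$ obtained by substituting (\ref{Nick}) into the definition of $\partial_k^2$, which turns (\ref{sumbeta}) into an identity involving only~$s$.

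Where you diverge is in how the $\beta_k$ are then exhibited. The paper argues exactly as in Lemma~\ref{lemtech1}: it views both sides of the rewritten identity as linear forms in the five numbers $s^{n-3},\ldots,s^{n+1}$ and checks the relation on the hat basis $\phi_{(k)}$, obtaining boundedness of the $\beta_k$ by the same ``$\sim 1/\tau^2$'' scaling argument. You instead telescope directly, using the clean identity $\partial_{m-1}s-\partial_m s=-\tfrac{\tau_{m-2}}{2}\partial^2_{m-1}s-\tfrac{\tau_m}{2}\partial^2_m s$, and arrive at closed-form expressions for $\beta_{n-2},\beta_{n-1},\beta_n$. Your route is a bit more elementary and has the advantage of producing explicit formulas (from which the bound $|\beta_k|\le C$ is read off immediately via ratios $\tau_{n-j}/\tau_n$); the paper's route is more uniform with the proof of Lemma~\ref{lemtech1} but leaves the actual values of the $\beta_k$ implicit.
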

\begin{proof}
As in proof of Lemma \ref{lemtech1}, we assume $V_h=\mathbb{R}$, drop the sub-indexes $h$ and interpret $w^n, s^n$ as the values of continuous real valued functions $w(t)$, $s(t)$ at $t=t_n$. Using (\ref{Nick}) and notations (\ref{notation}) implies $\partial_k^2w=\partial_ks$. Now, we are able to rewrite (\ref{sumbeta}) in terms of $s^n$ only 
\begin{equation}
 \label{sumbetaS} \sum_{k = n - 2}^n \alpha_k {\partial_ks}= \left(\sum_{k = n - 2}^n \alpha_k\right) {\partial_ns} - \tau_n  \sum_{k = n - 2}^n \beta_k \partial_k^2s
\end{equation}
As in the proof of Lemma \ref{lemtech1} we take into account the fact that equation (\ref{sumbetaS}) should hold for every 5 numbers $\{s^{n - 3}, \ldots, s^{n + 1} \}$  and therefore it's enough to check equality (\ref{sumbetaS}) on 5 linearly independent piecewise linear functions $\phi_{(k)}$ introduced by (\ref{LinFunct}). Using the reasoning as in Lemma \ref{lemtech1} leads to desired result (\ref{sumbeta}).
\end{proof}

We can now prove an \textit{a posteriori} error estimate involving
$\partial^{4}_{n}u_{h}$. Since the latter is computed
through 5 points in time $\{t_{n - 3}, \ldots, t_{n + 1} \}$, we shall refer
to this approach as the 5-point estimator. For the same reason, this estimator
is only applicable from time $t_4$. The error at first 3 time steps should be
thus measured differently, for example using the 3-point estimator from Theorem
\ref{lemest3}. The deriving of 5-point time error estimator is based on main ideas from the proof of Theorem \ref{lemest3} using Lemma \ref{lemtech1}. 

\begin{theorem}\label{lemest5}
The following \textit{a posteriori} error estimate holds between the solution $u$ of the wave equation (\ref{wave}) and the discrete solution $u_h^n$ given by (\ref{Newm1})--(\ref{Newm2}) for all $ t_n,~4\leq n\leq N$ with $v_h^n$ given by (\ref{vhform}):
\begin{multline}\label{estf5}
\left(\left\Vert v^{n}_h- \cfrac{\partial u}{\partial t} (t_n)\right\Vert_{L^2(\Omega)} ^{2}+\left\vert u^{n}_h-u(t_{n})\right\vert ^{2}_{H^1(\Omega)}\right) ^{1/2}\\
\leq\left(\left\Vert v^{3}_h- \cfrac{\partial u}{\partial t} (t_3)\right\Vert_{L^2(\Omega)} ^{2}+\left\vert u^{3}_h-u(t_{3})\right\vert ^{2}_{H^1(\Omega)}\right) ^{1/2}\\
+\eta _{S}(t_{N})+\sum_{k=3}^{N-1}\tau_k\hat\eta _{T}(t_{k})
+h.o.t.
\end{multline}
where the space indicator is defined by (\ref{space}) and
the error indicator in time is
  \begin{equation}
    \hat\eta_T (t_k) = \left(\frac{1}{12}\tau_{k}^2+\frac{1}{8}\tau_{k-1}\tau_{k}\right) \left(\left| \partial_k^2 v_h \right|_{H^1(\Omega)} +\left\|
    \partial_k^4 u_h \right\|_{L^2(\Omega)}\right)  \label{in5}
  \end{equation}
with a constant $C>0$ depending only on the mesh regularity in time, i.e. on $\max_{k\ge 0}\left( \frac{\tau_{k+1}}{\tau_k} + \frac{\tau_k}{\tau_{k+1}} \right)$. Finally. h.o.t. (higher order terms) in (\ref{estf5}) hide terms of order $O(\tau^3)$.  
\end{theorem}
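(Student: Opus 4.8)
The plan is to follow closely the derivation of the 3-point estimator from Theorem \ref{lemest3}, and then to show that the only term which differs, namely the Laplacian term $\|\partial_k^2 f_h - z_h^k\|_{L^2(\Omega)}$ appearing in $\eta_T(t_k)$, can be traded for $\|\partial_k^4 u_h\|_{L^2(\Omega)}$ up to controllable mesh-regularity constants and genuinely higher order terms. The starting point is that $z_h^k$ is, by (\ref{zh}), a discrete Laplacian of $u_h^k$, so that $\partial_k^2 f_h - z_h^k$ is the Galerkin approximation (tested against $V_h$) of $\partial_k^2 f_h + \Delta u_h^k$, which at the continuous level equals $\partial^2 f/\partial t^2 + \Delta u = \partial^4 u/\partial t^4$ by the wave equation. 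The aim is to quantify this heuristic using the discrete scheme itself rather than the PDE.

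The key steps I would carry out are: (i) Write the Crank–Nicolson system (\ref{CNh1})–(\ref{CNh2}) at the relevant time levels and observe, as in the proof of Lemma \ref{lemtech2}, that testing (\ref{CNh2}) with $\varphi_h$ gives $(\partial_k^2 v_h,\varphi_h) + (\nabla \bar u_h^k,\nabla\varphi_h) = (\partial_k^2 f_h,\varphi_h)$, where $\bar u_h^k$ is the weighted average (\ref{bar}); hence $z_h^k$ in (\ref{zh}) relates to $\bar u_h^k$ via $(z_h^k,\varphi_h) = (\nabla\partial_k^2 u_h,\nabla\varphi_h)$, and the quantity controlling the time error is really $\partial_k^2 v_h - (\text{discrete Laplacian of } \partial_k^2 u_h)$ tested against $V_h$. (ii) Apply the operator $\hat\partial_n^2$ of (\ref{newd2}) to the identity from (i): by Lemma \ref{lemtech1}, $\hat\partial_n^2 \bar u_h = \sum_{k=n-2}^n \alpha_k \partial_k^2 u_h$, and by Lemma \ref{lemtech2} the combination $\sum\alpha_k\partial_k^2 u_h$ equals $(\sum\alpha_k)\partial_n^2 u_h$ plus a $\tau_n$-small remainder involving $\partial_k^2 v_h$; composing these with the definition (\ref{newd4}) of $\partial_n^4 u_h = \hat\partial_n^2\partial^2 u_h$ shows that $\partial_n^4 u_h$, tested against $V_h$, reproduces $\hat\partial_n^2$ applied to the discrete Laplacian of $\bar u_h$, i.e. to $\hat\partial_n^2 \partial_k^2 f_h - \hat\partial_n^2 z_h$, up to the uniformly bounded factor $\sum\alpha_k \ge C > 0$. (iii) Combine this with a telescoping/summation-by-parts argument on the sum $\sum_{k}\tau_k\eta_T(t_k)$ exactly as in \cite{our1}: the extra finite differencing in time that converts $\partial_k^2$ into $\partial_n^4$ costs one more order in $\tau$ at worst at the endpoints, which is absorbed into the $h.o.t.$ term of order $O(\tau^3)$, while the bulk sum keeps the same structure and the same $\left(\frac{1}{12}\tau_k^2 + \frac18\tau_{k-1}\tau_k\right)$ prefactor. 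The space indicator $\eta_S$ is left untouched since it already involves only $\tilde u_{h\tau}$, $\tilde v_{h\tau}$ and $f$, not the discrete Laplacian.

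I would organize the write-up by first restating the quadratic reconstruction and the residual identity from the proof of Theorem \ref{lemest3}, then inserting the new algebraic lemma (essentially the synthesis of Lemmas \ref{lemtech1} and \ref{lemtech2}) that expresses the residual $\partial_k^2 f_h - z_h^k$ through $\partial_n^4 u_h$ and $\partial_k^2 v_h$ with bounded coefficients, and finally redoing the Grönwall/summation step that produces (\ref{estf5}). The bookkeeping of the first few time slabs ($n=1,2,3$) is handled by starting the 5-point estimator at $t_4$ and carrying the error at $t_3$ as an initial term on the right-hand side of (\ref{estf5}), just as $t_0$ is handled in Theorem \ref{lemest3}.

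The main obstacle I anticipate is step (ii)–(iii): controlling the error committed when replacing the consistent second-order time difference $\partial_k^2$ by the reconstructed $\hat\partial_n^2$ on non-uniform meshes. Since $\partial_n^4$ is \emph{not} consistent with $\partial^4/\partial t^4$ when $\tau_n \neq \mathrm{const}$ (as the paper explicitly warns), one cannot argue by Taylor expansion alone; the argument must instead stay purely algebraic, exploiting the exact cancellation $\hat\partial_n^2\bar w = \sum\alpha_k\partial_k^2 w$ of Lemma \ref{lemtech1} together with the sign and boundedness of $\sum\alpha_k$, so that the inconsistency is harmless because it is applied to a quantity ($\partial_k^2 u_h$) that is itself already a difference quotient of the same object appearing in the estimator. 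Making the telescoping come out with precisely the claimed prefactor and with all leftover boundary contributions provably $O(\tau^3)$ — and checking that the mesh-regularity constants from Lemmas \ref{lemtech1}–\ref{lemtech2} do not pollute the leading $\frac{1}{12}\tau_k^2 + \frac18\tau_{k-1}\tau_k$ coefficient — is the delicate part of the bookkeeping.
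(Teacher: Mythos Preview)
Your overall strategy coincides with the paper's: derive the algebraic identity
\[
\partial_n^2 f_h - A_h \partial_n^2 u_h \;=\; \gamma\,\partial_n^4 u_h \;+\; \tau_n \sum_{k=n-2}^{n}\gamma_k\bigl(\partial_k^2 \dot f_h - A_h \partial_k^2 v_h\bigr)
\]
from the scheme together with Lemmas~\ref{lemtech1}--\ref{lemtech2}, substitute it into the residual equation for the quadratic reconstruction $\tilde U_{h\tau}$, and redo the energy argument of Theorem~\ref{lemest3}. In that sense your plan is correct and matches the paper.

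There is, however, one genuinely missing idea in your sketch. The remainder in the identity above still contains the discrete Laplacian $A_h\partial_k^2 v_h$, and in the energy identity it appears tested against $\tilde I_h E_v$. A direct bound would give $(\nabla\partial_k^2 v_h,\nabla\tilde I_h E_v)\le |\partial_k^2 v_h|_{H^1(\Omega)}\,|E_v|_{H^1(\Omega)}$, which is useless because only $\|E_v\|_{L^2(\Omega)}$ is controlled by the energy. The paper handles this by first using the reconstruction relation
\[
E_v \;=\; \frac{\partial E_u}{\partial t} \;+\; p_m\,\partial_m^2 v_h \quad\text{on }[t_m,t_{m+1}],
\]
which follows from (\ref{CNh1}) and the definition of $\tilde u_{h\tau},\tilde v_{h\tau}$, and then integrating $\int_{t_m}^{t_{m+1}}\tau_m p_m(\nabla\partial_k^2 v_h,\nabla\tilde I_h\partial_t E_u)\,dt$ by parts \emph{in time}. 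This swaps $E_v$ for $E_u$ and turns the troublesome term into contributions of the form $\tau_m^2|\partial_k^2 v_h|_{H^1(\Omega)}$ (absorbed into the leading $|\partial_k^2 v_h|_{H^1}$ part of $\hat\eta_T$) plus genuinely higher order pieces $\sim\tau_m^5|\partial_m^2 v_h|_{H^1}^2$ and $\tau_m^4\|\partial_k^2\dot f_h\|_{L^2}$, which constitute the $h.o.t.$ This integration-by-parts-in-time step is new relative to the 3-point proof and is not the ``telescoping/summation-by-parts on $\sum_k\tau_k\eta_T(t_k)$'' that you describe; without it you cannot eliminate $A_h$ from the remainder and the argument stalls.

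A second, smaller point: the contribution of this remainder is of the \emph{same} order $\tau_k^2|\partial_k^2 v_h|_{H^1}$ as the leading term of $\hat\eta_T$, so the exact prefactor $\tfrac{1}{12}\tau_k^2+\tfrac18\tau_{k-1}\tau_k$ necessarily gets multiplied by a mesh-regularity constant (this is the $C$ in the theorem statement). Your worry about ``not polluting'' that coefficient is therefore misplaced: it \emph{is} polluted, and the paper accepts this.
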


\begin{proof}
Rewrite scheme (\ref{Newm2})  as
\begin{equation}
  \partial_n^2 u_h + A_h \bar{u}_h^n = \bar{f}_h^n
  \label{Newmh}
\end{equation}
for $n = 0, \ldots, N-1$ where $f_h^n=P_h f(t_n,\cdot)$ and operator $A_h$ defined in (\ref{Ah}). 
Taking a linear combination of instances of (\ref{Newmh}) at steps $n, n - 1,
n - 2$ with appropriate coefficients gives
\begin{equation}
  \label{Newmhd} \partial_n^4 u_h + A_h  \hat{\partial}^2_n  \bar{u}_h =
  \hat{\partial}^2_n  \bar{f}_h
\end{equation}
Using the definition of operator $\hat{\partial}_n^2$ and re-introducing
$v_h^n$ by (\ref{CNh1}) leads to
\[  \hat{\partial}^2_n  \bar{u}_h = \sum_{k = n - 2}^n \alpha_k \partial_k^2
   u_h = \left(\sum_{k = n - 2}^n \alpha_k\right)\partial_n^2 u_h - \tau_n  \sum_{k = n - 2}^n \beta_k \partial_k^2
   v_h \]
withcoefficients $\alpha_k, \beta_k$ introduced in Lemma \ref{lemtech1} and Lemma \ref{lemtech2}. Moreover, by Lemma \ref{lemtech1} $\gamma=\left(\sum_{k = n - 2}^n \alpha_k\right)^{-1}$ is positive and bounded so that  
\[  
  \partial_n^2 u_h = \gamma \hat{\partial}^2_n   \bar{u}_h
     + \tau_n  \sum_{k = n - 2}^n \gamma_k \partial_k^2   v_h 
\]
with $\gamma_k = \gamma\beta_k$
that are all uniformly bounded on regular meshes in time.
Similarly,
\[ 
  \partial_n^2 f_h
   = \hat{\partial}^2_n  \bar{f}_h + \tau_n  \sum_{k = n - 2}^n \gamma_k \partial_k^2 
   \dot{f}_h \]
with $\dot{f}_h^n$ satisfying
\[ \frac{f_h^{n + 1} - f_h^n}{\tau_n} = \frac{\dot{f}_h^n +
   \dot{f}_h^{n + 1}}{2} \]
Thus,
\begin{align} 
\notag \partial^2_n f_h - A_h \partial^2_n u_h & = \hat{\partial}^2_n  \bar{f}_h -
   A_h  \hat{\partial}^2_n  \bar{u}_h + \tau_n  \sum_{k = n - 2}^n \gamma_k
   \left(\partial_k^2  \dot{f}_h - A_h \partial_k^2 v_h\right) \\
\notag  & = \partial_n^4 u_h +
   \tau_n  \sum_{k = n - 2}^n \gamma_k \left(\partial_k^2  \dot{f}_h - A_h
   \partial_k^2 v_h\right) 
   \end{align}
We can now reproduce the proof of Theorem \ref{lemest3}. In the following, we adopt the vector notation $U (t, x)
=\begin{pmatrix}
  u (t, x)\\
  v (t, x)
\end{pmatrix}$ where $v = {\partial u}/{\partial t}$. Note that the first equation in
(\ref{syst}) implies that
\[ \left(\nabla \cfrac{\partial u}{\partial t}, \nabla \varphi\right) - (\nabla v, \nabla \varphi) = 0,
   \quad \forall \varphi \in H^1_0 (\Omega) \]
by taking its gradient, multiplying it by $\nabla \varphi$ and integrating
over $\Omega$. Thus, system (\ref{syst}) can be rewritten in the vector
notations as
\begin{equation}
  {b} \left(\cfrac{\partial U}{\partial t}, \Phi\right) + \left(\mathcal{A} \nabla U, \nabla \Phi\right) = 
  {b} (F,\Phi), \quad  \forall \Phi \in (H^1_0 (\Omega))^2 \label{ODEf}
\end{equation}
where $\mathcal{A} = \begin{pmatrix}
  0 &- 1\\
  1&~0
\end{pmatrix}$, $F =\begin{pmatrix}
  0\\
  f
\end{pmatrix}$ and
$$ {b} ( U, \Phi)
 = {b}\left(\begin{pmatrix}u\\ v\end{pmatrix}, \begin{pmatrix} \varphi\\ \psi\end{pmatrix} \right)
:= (\nabla u, \nabla \varphi) + (v, \psi)
$$
Similarly, Newmark scheme (\ref{CNh1})--(\ref{CNh2}) can be rewritten as
\begin{equation}
  {b} \left( \frac{U_h^{n + 1} - U_h^n}{\tau_n}, \Phi_h \right) +
  \left( \mathcal{A} \nabla \frac{U_h^{n + 1} + U_h^n}{2}, \nabla \Phi_h \right) = 
  {b} \left(F^{n+1/2}, \Phi_h\right), \hspace{1em} \forall \Phi_h \in V_h^2
  \label{vectorScheme}
\end{equation}
where $U_h^n = \begin{pmatrix}
  u_h^n\\
  v_h^n
\end{pmatrix}$  and $F^{n+1/2} = \begin{pmatrix}
  0\\
  f^{n + 1/2}
\end{pmatrix}$.

The \textit{a posteriori} analysis relies on an appropriate residual equation for the quad\-ra\-tic reconstruction $\tilde{U}_{h\tau}=\begin{pmatrix} \tilde{u}_{h\tau}\\ \tilde{v}_{h\tau}\end{pmatrix}$. We have thus for $t \in [t_n,t_{n + 1}]$, $n = 1, \ldots, N-1$
\begin{equation}
  \tilde{U}_{h\tau} (t) = U^{n + 1}_h + (t - t_{n + 1}) \partial_{n +
  1/2} U_h + \frac{1}{2}  (t - t_{n + 1})  (t - t_n) \partial_n^2 U_h
\end{equation}
so that, after some simplifications,
\begin{multline}\label{Uode1}
{b} \left( \frac{\partial \tilde{U}_{h\tau}}{\partial t}, \Phi_h
   \right) + (\mathcal{A} \nabla \tilde{U}_{h\tau}, \nabla \Phi_h)
   ={b} \left( (t - t_{n + 1/2}) \partial_n^2 U_h + F^{n+1/2},
   \Phi_h \right) \\
   + \left( (t - t_{n + 1/2}) \mathcal{A} \nabla
   \partial_{n + 1/2} U_h + \frac{1}{2} (t - t_{n + 1}) (t - t_n)
   \mathcal{A} \nabla \partial_n^2 U_h, \nabla \Phi_h \right)
\end{multline}
Consider now (\ref{vectorScheme}) at time steps $n$ and $n - 1$. Subtracting one from another and dividing by $\tau_{n - 1/2}$ yields
$$
{b} \left(\partial_n^2 U_h, \Phi_h\right) + \left(\mathcal{A} \nabla \partial_n
   U_h, \nabla \Phi_h\right) = {b}\left( \partial_n F, \Phi_h \right) 
$$
or
$$
{b} \left(\partial_n^2 U_h, \Phi_h\right) + \left(\mathcal{A} \nabla \left(
   \partial_{n + 1/2} U_h - \frac{\tau_{n - 1}}{2} \partial_n^2 U_h
   \right), \nabla \Phi_h \right) = 
   {b}\left(\partial_n F, \Phi_h \right) 
$$
so that (\ref{Uode1}) simplifies to
\begin{multline}\label{Uode2}
 {b} \left(\frac{\partial \tilde{U}_{h\tau}}{\partial t}, \Phi_h
   \right) + \left(\mathcal{A} \nabla \tilde{U}_{h\tau}, \Phi_h\right) \\
   = \left(p_n \mathcal{A} \nabla \partial_n^2 U_h, \nabla \Phi_h\right) + 
   {b}\left( \left(t - t_{n + 1/2}\right) \partial_n F + F^{n+1/2}, \Phi_h
   \right) \\
   = \left(p_n \mathcal{A} \nabla \partial_n^2 U_h, \nabla \Phi_h\right) + 
   {b}\left( \tilde{F}_\tau - p_n \partial^2_n F, \Phi_h\right) 
\end{multline}
where
\begin{align*} p_n &= \frac{\tau_{n - 1}}{2}  (t - t_{n + 1/2}) + \frac{1}{2}
   (t - t_{n + 1})  (t - t_n), \\
  \tilde{F}_{\tau} (t) &= F^{n + 1}_h + (t - t_{n + 1}) \partial_{n +
  1/2} F + \frac{1}{2}  (t - t_{n + 1})  (t - t_n) \partial_n^2 F.
\end{align*}

Introduce the error between reconstruction $\tilde{U}_{h\tau}$ and solution
$U$ to problem (\ref{ODEf}) :
\begin{equation}
  E = \tilde{U}_{h\tau} - U
\end{equation}
or, component-wise
$$
E = \begin{pmatrix}
     E_u\\
     E_v
   \end{pmatrix} = \begin{pmatrix}
     \tilde{u}_{h\tau} - u\\
     \tilde{v}_{h\tau} - v
   \end{pmatrix}
$$
Taking the difference between (\ref{Uode2}) and (\ref{ODEf}) we obtain the residual differential equation for the error valid for $t \in [t_n, t_{n
+ 1}]$, $n = 1, \ldots, N-1$
\begin{align}  
\label{errode}{b}({\partial}_{t}E,{\Phi})+(\mathcal{A}{\nabla} E,{\nabla} {\Phi})
  &={b} \left(\cfrac{{\partial} \tilde{U}_{{\tau} h}}{\partial t}-F,{\Phi}-{\Phi}_{h}\right)+\left(\mathcal{A}{\nabla} \tilde{U}_{{\tau} h},{\nabla} ({\Phi}-{\Phi}_{h})\right)\\
\notag  +\left(p_n \mathcal{A}{\nabla} {\partial}_{n}^{2} U_{h},{\nabla} {\Phi}_{h}\right)
  &+{b}\left( \tilde{F}_\tau -F - p_n \partial^2_n F, \Phi_h\right), \hspace{1em} \forall \Phi_h \in V_h^2 
\end{align}

Now we take $\Phi = E$, $\Phi_h = \begin{pmatrix}
  \Pi_h E_u\\
  \tilde I_h E_v
\end{pmatrix}$ where $\Pi_h : H^1_0 (\Omega) \to V_h$ is the
$H^1_0$-orthogonal projection operator (\ref{Pih}) and $\tilde I_h : H^1_0 (\Omega) \to V_h$ is a
Cl{\'e}ment-type interpolation operator which is also a projection \cite{ErnGue, ScoZh}. Noting that $( \mathcal{A} \nabla E,
\nabla E) = 0$ and
$$
\left( \nabla \cfrac{\partial \tilde{u}_{h\tau}}{\partial t}, \nabla ( E_u -
\Pi_h E_u)\right) = \left( \nabla \tilde{v}_{h\tau}, \nabla \left( E_u - \Pi_h E_u\right)\right) = 0
$$
Introducing operator $A_h:V_h\to V_h$ such that
\begin{equation}\label{Ah}
\left(A_h w_h, \varphi_h\right) =(\nabla w_h, \nabla\varphi_h),
\quad\forall\varphi_h\in V_h
\end{equation} 
we get
\begin{align*}
  \left(\cfrac{{\partial} E_{v}}{\partial t},E_{v}\right)+\left({\nabla} E_{u},{\nabla} \cfrac{{\partial} E_{u}}{\partial t}\right)=\left(\cfrac{{\partial}\tilde{v}_{{\tau} h}}{\partial t}-f,E_{v}-{\Pi}_{h} E_{v}\right)+\left({\nabla} \tilde{u}_{{\tau}h},{\nabla}\left(E_{v}-\tilde I_{h} E_{v}\right)\right)\\
  + \left(p_n \left(A_h {\partial}_{n}^{2} u_{h}-{\partial}_{n}^{2}f_h\right),\tilde I_{h} E_{v}\right)-\left(p_n {\nabla}{\partial}_{n}^{2} v_{h},{\nabla}E_{u}\right)
  +\left(\tilde{f}_\tau-f,\tilde I_{h} E_{v}\right).
\end{align*}
Note that equation similar to (\ref{errode}) also holds for $t \in [t_0, t_{1}]$
\begin{align} 
\label{errorode1st}{b}({\partial}_{t}E,{\Phi})+(\mathcal{A}{\nabla} E,{\nabla} {\Phi})
  &={b} \left(\cfrac{{\partial} \tilde{U}_{{\tau} h}}{\partial t}-F,{\Phi}-{\Phi}_{h}\right)+\left(\mathcal{A}{\nabla} \tilde{U}_{{\tau} h},{\nabla} ({\Phi}-{\Phi}_{h})\right)\\
\notag  &+\left(p_1 \mathcal{A}{\nabla} {\partial}_{1}^{2} U_{h},{\nabla} {\Phi}_{h}\right)
  + {b}\left( \tilde{F}_\tau -F - p_1 \partial^2_1 F, \Phi_h\right). 
\end{align}
That follows from the definition of the piecewise quadratic reconstruction $\tilde{u}_{h\tau} (t)$ for $t \in [t_0, t_{1}]$. Integrating (\ref{errode}) and (\ref{errorode1st}) in time from 0 to some $t^{\ast}\geq t_1$ yields
\begin{align}
  \notag  & {\frac{1}{2}} \left(|E_{u}|^{2}_{H^1(\Omega)}+\| E_{v}\|_{L^2(\Omega)}^{2}\right)(t^{{\ast}}) \\
  &=
  {\frac{1}{2}} \left(|E_{u}|^{2}_{H^1(\Omega)}+\| E_{v}\|_{L^2(\Omega)}^{2}\right)(0)
  \notag\\
  &+ \int_{0}^{t^{{\ast}}}\left(\cfrac{{\partial\tilde{v}_{{\tau} h}}}{\partial t}-f, E_{v}-\tilde I_{h} E_{v}\right) d t
  +\int_{0}^{t^{{\ast}}}\left({\nabla} \tilde{u}_{{\tau} h},{\nabla} ( E_{v}-\tilde I_{h}  E_{v})\right) d t
 \notag \\
  &+\int_{t_1}^{t^{{\ast}}}\left[\left(p_n \left(A_h {\partial}_{n}^{2} u_{h}-{\partial}_{n}^{2}f_h\right),\tilde I_{h} E_{v}\right)-\left(p_n {\nabla}{\partial}_{n}^{2} v_{h},{\nabla}E_{u}\right)
  +\left(\tilde{f}_\tau-f,\tilde I_{h} E_{v}\right)\right] d t \notag\\
  &+\int_{0}^{t_1}\left[\left(p_1 \left(A_h {\partial}_{1}^{2} u_{h}-{\partial}_{1}^{2}f_h\right),\tilde I_{h} E_{v}\right)-\left(p_1 {\nabla}{\partial}_{1}^{2} v_{h},{\nabla}E_{u}\right)
  +\left(\tilde{f}_\tau-f,\tilde I_{h} E_{v}\right)\right] d t 
  \notag\\
  &\hspace{1cm}:=I+I I+I I I+IV .\notag
  \\
    \label{4newterms} 
  &
\end{align}
Let
\begin{equation*}
 Z (t) = \sqrt{| E_u |_{H^1(\Omega)}^2 + \| E_v \|_{L^2(\Omega)}^2}
\end{equation*}
and assume that $t^{\ast}$ is the point in time where $Z$ attains its maximum and $t^{\ast} \in (t_n, t_{n + 1}]$
for some $n$. For the first and second terms in (\ref{4newterms}) we have 

\begin{align*}
  I+II
  &\leq C_{1}\Biggl[\sum_{K{\in}\mathcal{T}_h}h_{K}^{2}\left\Vert\cfrac{{\partial}\tilde{v}_{h{\tau}}}{\partial t}-{\Delta}\tilde{u}_{h{\tau}}-f\right\Vert_{L^2( K)}^{2}\\
 &~~~~~~~~~~~~~~~~~~~~~~~~~~~~+\sum_{
E \in \mathcal{E}_h}h_{E}\left\|[n{\cdot}{\nabla}\tilde{u}_{h{\tau}}]\right\|_{L^2(E)}^{2}\Biggr]^{1/2}(t^{{\ast}})|E_{u}|_{H^1(\Omega)}(t^{{\ast}}) \\
  &+C_{1}\Biggl[\sum_{K{\in}\mathcal{T}_h}h_{K}^{2}\left\Vert\cfrac{{\partial}\tilde{v}_{h{\tau}}}{\partial t}-{\Delta}\tilde{u}_{h{\tau}}-f\right\Vert_{L^2(K)}^{2}\\
  &~~~~~~~~~~~~~~~~~~~~~~~~~~~~~~+\sum_{
E \in \mathcal{E}_h}h_{E}\left\|[n{\cdot}{\nabla}\tilde{u}_{h{\tau}}]\right\|_{L^2(E)}^{2}\Biggl]^{1/2}(0)|E_{u}|_{H^1(\Omega)}(0)\\
  &
  +C_{2}\sum_{m=1}^{n}\frac{{\tau}_{m-1}}{2}\left[\sum_{K{\in}\mathcal{T}_h}h_{K}^{2}\left\|{\partial}_{m}^{2}v_{h}-{\partial}_{m-1}^{2}v_{h}\right\|_{L^2( K)}^{2}\right]^{1/2}|E_{u}|_{H^1(\Omega)}(t_{m})\\
  &
  +C_{3}\sum_{m=0}^{n}\int_{t_{m}}^{min(t_{m+1},t^{{\ast}})}\Biggl[\sum_{K{\in}\mathcal{T}_h}h_{K}^{2}\left\Vert\cfrac{{\partial}^{2}\tilde{v}_{h{\tau}}}{\partial t^2}-{\Delta}\cfrac{{\partial}\tilde{u}_{{\tau}h}}{\partial t}-\cfrac{\partial f}{\partial t}\right\Vert_{L^2(K)}^{2}\\
  &~~~~~~~~~~~~~~~~~~~~~~~~~~~~+\sum_{
E \in \mathcal{E}_h}h_{E}\left\Vert\left[n{\cdot}{\nabla}\cfrac{{\partial}\tilde{u}_{{\tau}h}}{\partial t}\right]\right\Vert_{L^2(E)}^{2}\Biggr]^{1/2 }(t)|E_{u}|_{H^1(\Omega)}(t)dt.
\end{align*}
 
Indeed, it follows from integration by parts with respect to time, see the proof of Theorem 3.2 in \cite{our1}. The third and the fourth term in
(\ref{4newterms}) are responsible for the time estimator. We neglect the error at first 3 time steps, in other words we forget about the fourth term in
(\ref{4newterms}) and do not take into account terms at time steps before $t_3$. Thus the third term can be now written as

\begin{align}\label{thirdterm}
  \notag III  &= \sum_{n=3}^{N-1} \int_{t_m}^{\min (t_{m + 1}, t^{\ast})} \Biggl[\left(p_m \left(
  \partial_m^4 u_h + \tau_m  \sum_{k = m - 2}^m \gamma_k \left( \partial_k^2 
  \dot{f}_h - A_h \partial_k^2 v_h \right) \right), \tilde I_h E_v\right) \\
   &- (p_m \nabla \partial_m^2 v_h, \nabla E_u) + (\tilde{f}_{\tau} - f, \tilde I_h E_v)\Biggr] dt.
\end{align}

We now observe for $t \in [t_m,t_{m + 1}]$, $m = 1, \ldots, N-1$
\[ E_v = \frac{\partial_{} E_u}{\partial t} - p_{m} \partial_m^2 v_h \]
and integrate by parts in time in the terms in (\ref{thirdterm}) involving
$A_h \partial_k^2 v_h$
\begin{align*}
 & \int_{t_m}^{\min (t_{m + 1}, t^{\ast})} \tau_m p_m \left(A_h \partial_m^2 v_h,
  \tilde I_h E_v\right) {dt} \\
  &= \int_{t_m}^{\min (t_{m + 1}, t^{\ast})} \tau_m p_m \left(\nabla
  \partial_m^2 v_h, \nabla \tilde I_h \left( \frac{\partial_{} E_u}{\partial t} -
  p_{m} \partial_m^2 v_h \right)\right) {dt} \\
 &= \left[\tau_m p_m (\nabla \partial_k^2
  v_h, \nabla \tilde I_h E_u)\right]_{t_m}^{\min (t_{m + 1}, t^{\ast})} -  \int_{t_m}^{\min (t_{m + 1}, t^{\ast})} \tau_m p_m'  \left(\nabla \partial_m^2 v_h, \nabla \tilde I_h E_u\right) dt
  \\&- \int_{t_m}^{\min (t_{m + 1}, t^{\ast})} \tau_m p_m^2  \left(\nabla \partial_m^2
  v_h, \nabla \partial_m^2 v_h\right) dt.
\end{align*}
The first two terms above will result in a contribution to the error estimator
$\sim \tau_n^2  \left| \partial_n^2 v_h \right|_{H^1(\Omega)}$ (note that \ $\tau_n p_{n}'
\sim \tau_n^2$) while the remaining term is of a higher order. Thus the order of term h.o.t in error estimator (\ref{estf5}) is 
\begin{equation*}
    h.o.t.\sim \sum_{n=1}^{N-1} \left(\tau_n^4  \left\Vert \partial_n^2 \dot{f}_h \right\Vert_{L^2(\Omega)} +\tau_n^6  \left| \partial_n^2 v_h \right|_{H^1(\Omega)}\right).
  \end{equation*}
Thus noting the fact
$$
\int_{t_m}^{t_{m + 1}} |p_m|{dt} \leq \frac{1}{12}{\tau}_{m}^{3}+\frac{1}{8}{\tau}_{m-1}{\tau}_{m}^2
$$
we obtain (\ref{in5}).
\quad  
\end{proof}

Finally we show the upper bound for $5$-point time estimator at least for sufficiently smooth solutions, quasi-uniform meshes in space and uniform meshes in time.
\begin{theorem}
Let u be the solution of wave equation (\ref{wave}) and 
$\displaystyle\cfrac{\partial^{3}u}{\partial t^{3}}(0)\in {H^1(\Omega)}$, $\displaystyle\cfrac{\partial^{2}u}{\partial t^{2}}(0) \in {H^2(\Omega)}$, $\displaystyle\cfrac{\partial^{2}f}{\partial t^{2}}(t)\in L^{\infty}(0,T;{L^2(\Omega)})$, $\displaystyle\cfrac{\partial^{3}f}{\partial t^{3}}(t)\in L^{2}(0,T;{L^2(\Omega)})$. Suppose that mesh $\Th$ is quasi-uniform and the mesh in time is uniform ($t_k=k\tau$). Then, the 5-point time error estimator $\hat{\eta}_T(t_k)$ defined by (\ref{in5}) is of order $\tau^2$, i.e. 
\begin{equation*}
\hat{\eta}_T(t_k)\leq C \tau^2.
\end{equation*}
with a positive constant $C$ depending only on $u$, $f$, and the mesh regularity.
\end{theorem}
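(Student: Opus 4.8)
The plan is to reduce the statement to the boundedness of discrete time‑derivatives that already underlies the optimality of the $3$-point estimator (Theorem~\ref{optimality}). On a uniform time mesh $t_k=k\tau$ the prefactor in (\ref{in5}) equals $\tfrac1{12}\tau^2+\tfrac18\tau^2=\tfrac5{24}\tau^2$, so it is enough to prove
\[ \bigl|\partial_k^2 v_h\bigr|_{H^1(\Omega)}=O(1)\qquad\text{and}\qquad \bigl\|\partial_k^4 u_h\bigr\|_{L^2(\Omega)}=O(1), \]
with constants depending only on $u$, $f$ and the mesh regularity. The first bound is exactly the boundedness of the higher order discrete derivatives established in \cite{our1} in the proof of Theorem~\ref{optimality} (cf. Remark~\ref{intImp}): under the present hypotheses — together with the initial approximation (\ref{initPi}), which is needed here just as in Theorem~\ref{optimality} — one has $\bigl|\partial_k^2 v_h\bigr|_{H^1(\Omega)}\le C$, and, since the term $\partial_k^2 f_h-z_h^k$ occurs inside $\eta_T(t_k)$ and $\eta_T(t_k)\le C\tau^2$ with a prefactor $\sim\tau^2$, also $\bigl\|\partial_k^2 f_h-z_h^k\bigr\|_{L^2(\Omega)}\le C$ for every admissible $k$ (recall $z_h^k=A_h\partial_k^2 u_h$ by (\ref{zh}),~(\ref{Ah})).

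The only new point is the bound on $\partial_k^4 u_h$, and the key observation is that $\partial_k^4 u_h$ is a uniformly bounded linear combination of precisely the $3$-point quantities $\partial_j^2 f_h-z_h^j$. I would start from identity (\ref{Newmhd}), namely $\partial_k^4 u_h=\hat{\partial}^2_k\bar f_h-A_h\hat{\partial}^2_k\bar u_h$, which is valid for $k=3,\dots,N-1$ — exactly the admissibility range of $\hat\eta_T$. Applying Lemma~\ref{lemtech1} to the collections $\bar f_h$ and $\bar u_h$, using the linearity of $A_h$ and the definition (\ref{zh}) of $z_h^j$, gives
\[ \partial_k^4 u_h=\sum_{j=k-2}^{k}\alpha_j\,\partial_j^2 f_h-A_h\sum_{j=k-2}^{k}\alpha_j\,\partial_j^2 u_h=\sum_{j=k-2}^{k}\alpha_j\bigl(\partial_j^2 f_h-z_h^j\bigr). \]
Since $|\alpha_j|\le c$ with $c$ depending only on the time‑mesh regularity (Lemma~\ref{lemtech1}), it follows that $\bigl\|\partial_k^4 u_h\bigr\|_{L^2(\Omega)}\le c\sum_{j=k-2}^{k}\bigl\|\partial_j^2 f_h-z_h^j\bigr\|_{L^2(\Omega)}=O(1)$. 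Combining this with the bound on $\bigl|\partial_k^2 v_h\bigr|_{H^1(\Omega)}$ and the $\tfrac5{24}\tau^2$ prefactor yields $\hat\eta_T(t_k)\le C\tau^2$.

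There is no genuinely hard step in this route: once the collapse $\partial_k^4 u_h=\sum_j\alpha_j(\partial_j^2 f_h-z_h^j)$ via Lemma~\ref{lemtech1} is noticed, the optimality of the $5$-point estimator is inherited verbatim from that of the $3$-point one. The point that deserves care — and that I would regard as the real content, if a self‑contained proof were wanted — is establishing $\bigl|\partial_k^2 v_h\bigr|_{H^1(\Omega)}=O(1)$ and $\bigl\|\partial_k^2 f_h-z_h^k\bigr\|_{L^2(\Omega)}=O(1)$ independently of \cite{our1}: one bounds $\partial_j^2 f_h$ by a Taylor expansion in time (controlled by $\|\partial^2 f/\partial t^2\|_{L^\infty(0,T;L^2(\Omega))}$ and the stability of the projection defining $f_h$), rewrites $z_h^j=A_h\partial_j^2 u_h$ through the Newmark relation (\ref{Newmh}) as $A_h\bar u_h^j=\bar f_h^j-\partial_j^2 u_h$ together with $\bar u_h^j=u_h^j+\tfrac{\tau^2}{4}\partial_j^2 u_h$ on uniform meshes, and closes an energy/induction estimate for $\partial_j^2 u_h$ and $\partial_j^2 v_h$ (this is where the regularity of the data and the choice (\ref{initPi}) of $u_h^0,v_h^0$ are used, exactly as in \cite{our1}).
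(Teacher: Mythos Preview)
Your proposal is correct and follows essentially the same route as the paper: the paper's proof also invokes Theorem~\ref{optimality} for the bounds on $|\partial_k^2 v_h|_{H^1(\Omega)}$ and $\|\partial_k^2 f_h - z_h^k\|_{L^2(\Omega)}$, and then uses identity~(\ref{Newmhd}) together with Lemma~\ref{lemtech1} to write $\partial_n^4 u_h=\sum_{k=n-2}^{n}\alpha_k(\partial_k^2 f_h - A_h\partial_k^2 u_h)$, which is exactly your collapse $\partial_k^4 u_h=\sum_j \alpha_j(\partial_j^2 f_h - z_h^j)$. Your additional remark that the initial approximation~(\ref{initPi}) is implicitly required (via the appeal to Theorem~\ref{optimality}) is well taken.
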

\begin{proof}
The result follows from Theorem \ref{optimality} by using (\ref{Newmhd}) and Lemma \ref{lemtech1}
\begin{align*}
\left\|\partial_n^4 u_h \right\|_{L^2(\Omega) }= \left\|\hat{\partial}^2_n  \bar{f}_h - A_h  \hat{\partial}^2_n  \bar{u}_h\right\|_{L^2(\Omega) } =  \left\|\sum_{k = n - 2}^n \alpha_k\left({\partial}^2_k  {f}_h - A_h  {\partial}^2_k  {u}_h\right)\right\|_{L^2(\Omega)}. 
\end{align*}
\end{proof}
\begin{remark}\label{intImp2}
Note, that as in the case for 3-point error estimator, the approximation of initial conditions and right-hand-side function is crucial for optimality of our time and space error estimators. 
\end{remark}

\section{Numerical results}\label{section4}
\subsection{A toy model: a second order ordinary differential equation}

Let us consider first the following ordinary differential equation
\begin{equation}
\begin{cases}
\cfrac{d^{2}u(t)}{dt^{2}}+Au(t)=f(t) ,&t\in\left[ 0;T\right]\\
u(0)=u_0 ,&\\
\cfrac{du}{dt}(0)=v_0&
\end{cases}
\label{ODE}
\end{equation}
with a constant $A>0$. This problem serves as simplification of the wave equation in which we get rid of the space variable. The Newmark scheme reduces in this case to
\begin{align}
\frac{u^{n+1}-u^{n}}{\tau_n}-\frac{u^{n}-u^{n-1}}{\tau_{n-1}}&+A\frac{\tau_n(u^{n+1}+u^{n})+\tau_{n-1}(u^{n}+u^{n-1})}{4}=
\notag\\
&=\frac{\tau_n (f^{n+1}+f^n)+\tau_{n-1}(f^n+f^{n-1})}{4},~1\leq n\leq N-1
\label{schN}\\
\frac{u^1-u^0}{\tau_0}&=v_0-\frac{\tau_0}{4}A(u^1+u^0)+\frac{\tau_0}{4}(f^1+f^0),
\notag\\
u^0&=u_0\notag
\end{align}
and the error become $e=\displaystyle\max_{0 \leq n \leq N}\left( \left\vert v^{n}-{u}'(t_{n})\right\vert ^{2}+A\left\vert u^{n}-u(t_{n})\right\vert ^{2}\right) ^{{1}/{2}}$. 3-point and 5-point \textit{a posteriori} error estimates $\forall n:~0\leq n \leq N$ simplify to this form:
\begin{align}\label{errest}
e \leq \sum_{k=0}^{n-1}\tau_k\eta_{T}(t_k)&= \tau_0 \left(\frac{5}{12}\tau_{0}^2+\frac{1}{2}\tau_{0}\tau_{1}\right) \sqrt{A(\partial_1^2 v)^2 +  (\partial_1^2f-A\partial_1^2 u)^2}\\
\notag   &+\sum_{k=1}^{n-1}\tau_k \left(\frac{1}{12}\tau_{k}^2+\frac{1}{8}\tau_{k-1}\tau_{k}\right) \sqrt{A(\partial_k^2 v)^2 +  (\partial_k^2f-A\partial_k^2 u)^2},  \\
e \leq \sum_{k=3}^{n-1}\hat{\eta}_{T}(t_k)&= \sum_{k=3}^{n-1}\tau_k \left(\frac{1}{12}\tau_{k}^2+\frac{1}{8}\tau_{k-1}\tau_{k}\right) \sqrt{A(\partial_k^2 v)^2 +  (\partial_k^4
   u)^2}
\end{align}

We define the following effectivity indexes  in order to measure the quality of our estimators $\eta_T$ and $\hat{\eta}_T$
\begin{equation*}
ei_T=\frac{\eta_T}{e},~~\hat{ei}_T=\frac{\hat{\eta}_T}{e}.
\end{equation*}
We present in Table \ref{tab:ode} the results for equation (\ref{ODE}) setting $f=0$, the exact solution $u=cos(\sqrt{A}t)$, final time $T=1$, and using constant time steps $\tau=\displaystyle {T}/{N}$. We observe that 3-point and 5-point estimators are divided by about 100 when the time step $\tau$ is divided by 10. The true error $e$ also behaves as $O(\tau^2)$ and hence both time error estimators behave as the true error.

\begin{table}
\begin{center}
\begin{tabular}{llrrrrr}
\hline\noalign{\smallskip}
$A$ & $N$ & $\eta_{T}$ & $\hat{\eta}_{T}$& $e$ & $ei_{T}$ & $\hat{ei}_{T}$\\
\noalign{\smallskip}\hline\noalign{\smallskip}
100 & 100 & .21 & .203 & .085 & 2.47 & 2.39 \\
100 & 1000 & .0021 & .0021 & 8.34e-04 & 2.5 & 2.49\\
100 & 10000 & 2.08e-05 & 2.08e-05 & 8.35e-06 & 2.5 & 2.5 \\
\noalign{\smallskip}\hline\noalign{\smallskip}
1000 & 100 & 20.51 & 19.47 & 8.35 & 2.46 & 2.33 \\
1000 & 1000 & .209 & .208 & .084 & 2.5 & 2.49\\
1000 & 10000 & .0021 & .0021 & 8.33e-04 & 2.5 & 2.5\\
\noalign{\smallskip}\hline\noalign{\smallskip}
10000 & 100 & 1.68e+03 & 1.4e+03 & 200 & 8.38 & 6.98\\
10000 & 1000 & 20.8 & 20.7 & 8.34 & 2.5 & 2.49\\
10000 & 10000 & .208 & .208 & .083 & 2.5 & 2.5\\
\noalign{\smallskip}\hline
\end{tabular}
\end{center}
\caption{Effective indices for constant time steps and $f=0$.}
\label{tab:ode}
\end{table}

In order to check behaviour of time error estimators for variable time step (see Table \ref{tab:ode2}) we take the previous example with time step $\forall n:~0\leq n \leq N$
\begin{equation}\label{tau10}
\tau_n=\begin{cases}
0.1\tau_{\ast} ,&mod(n,2)=0\\
\tau_{\ast} ,&mod(n,2)=1
\end{cases}
\end{equation}
where $\tau_{\ast}$ is a given fixed value.  As in the case of constant time step we have the equivalence between the true error and both estimated errors. We have plotted on Fig \ref{fig:toyIndicators} evolution in time of the values $\sum_{k=0}^{n-1}{\eta}_{T}(t_k)$ and $\sum_{k=3}^{n-1}\hat{\eta}_{T}(t_k)$ compared to $e$. 

Table {\ref{tab:ode3}} contains the results for even more non-uniform time step $\forall n:~0\leq n \leq N$
\begin{equation}\label{tau100}
\tau_n=\begin{cases}
0.01\tau_{\ast} ,&mod(n,2)=0\\
\tau_{\ast} ,&mod(n,2)=1
\end{cases}
\end{equation}
on otherwise the same test case. 
Note that in case when $A=100$ and $N=19800$ 5-points error estimator $\hat{\eta}_{T}$ blows up, while 3-point estimator behaves as the true error. This effect is consistent with Theorem {\ref{lemest3}}. Indeed, the constants in the bounds of this Lemma may depend on the meshes regularity in time.

Our conclusion is thus that for toy model classic and alternative \textit{a posteriori} error estimators are sharp on both constant and variable time grids. 
\begin{figure}[h!]
    \centering
    \includegraphics[width=.85\textwidth]{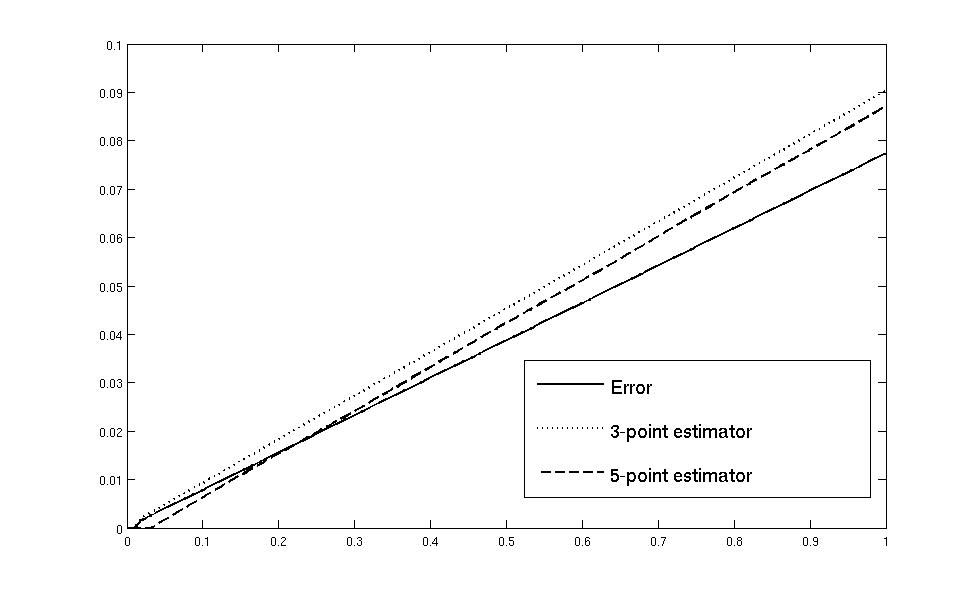}
    \caption{Evolution in time of 3-point and 5-point time estimators for variable time step (\ref{tau10}), $A=100$, $N=180$, $T=1$}
    \label{fig:toyIndicators}
\end{figure} 
\begin{table}[h!]
\begin{center}
\begin{tabular}{llrrrrr}
\hline\noalign{\smallskip}
$A$ & $N$ & $\eta_{T}$ & $\hat{\eta}_{T}$& $e$ & $ei_{T}$ & $\hat{ei}_{T}$\\\noalign{\smallskip}\hline\noalign{\smallskip}
100 & 180 & .09 & .087 & .077 & 1.17 & 1.13 \\
100 & 1816 & 8.85e-04 & 8.82 e-04 & 7.59e-04 & 1.17 & 1.16\\
100 & 18180 & 8.83e-06 & 8.83e-06 & 7.6e-06 & 1.16 & 1.16 \\
\noalign{\smallskip}\hline\noalign{\smallskip}
1000 & 180 & 8.91 & 8.52 & 7.6 & 1.17 & 1.13 \\
1000 & 1816 & .089 & .088 & .076 & 1.17 & 1.16\\
1000 & 18180 & 8.84e-04 & 8.83e-04 & 7.59e-04 & 1.16 & 1.16\\
\noalign{\smallskip}\hline\noalign{\smallskip}
10000 & 180 & 802.84 & 725.1 & 200 & 4.01 &3.63\\
10000 & 1816 & 8.84 & 8.8 & 7.58 & 1.17 & 1.16\\
10000 & 18180 & .088 & .088 & .076 & 1.16 & 1.16\\
\noalign{\smallskip}\hline
\end{tabular}
\end{center}
\caption{Effective indices for variable time step (\ref{tau10}) and $f=0$.}
\label{tab:ode2}
\end{table}
\begin{table}[h!]
\begin{center}
\begin{tabular}{llrrrrr}
\hline\noalign{\smallskip}
$A$ & $N$ & $\eta_{T}$ & $\hat{\eta}_{T}$& $e$ & $ei_{T}$ & $\hat{ei}_{T}$\\\noalign{\smallskip}\hline\noalign{\smallskip}
100 & 196 & .086 & .083 & .084 & 1.02 & 0.98 \\
100 & 1978 & 8.39e-04 & 8.36 e-04 & 8.26e-04 & 1.02 & 1.01\\
100 & 19800 & 8.38e-06 & 1.82e-05 & 8.1e-06 & 1.03 & 2.24 \\
\noalign{\smallskip}\hline\noalign{\smallskip}
1000 & 196 & 8.47 & 8.1 & 8.26 & 1.02 & 0.98 \\
1000 & 1978 & .083 & .084 & .0827 & 1.02 & 1.01\\
1000 & 19800 & 8.37e-04 & 8.37e-04 & 8.26e-04 & 1.01 & 1.01\\
\noalign{\smallskip}\hline\noalign{\smallskip}
10000 & 196 & 764.2 & 691.7 & 200 & 3.82 &3.46\\
10000 & 1978 & 8.39 & 8.35 & 8.25 & 1.02 & 1.01\\
10000 & 19800 & .084 & .084 & .083 & 1.01 & 1.01\\
\noalign{\smallskip}\hline
\end{tabular}
\end{center}
\caption{Effective indices for variable time step (\ref{tau100}) and $f=0$.}
\label{tab:ode3}
\end{table}

\subsection{The error estimator for the wave equation on Delaunay mesh}
We now report numerical results for initial boundary-value problem for wave equation with non-uniform time steps when using 3-point time error estimator (\ref{in}, \ref{in0step}) and 5-point time error estimator(\ref{in5}).
We compute two parts of the space estimator (\ref{space}) in practice as follows:
\begin{align}
 \notag \eta_S^{(1)} (t_N) = \max_{1 \leq n \leq N-1} \left[ \sum_{K \in \mathcal{T}_h}
   h_K^2  \left\Vert  \partial_n v_h-{f}^n_{h}
   \right\Vert_{L^2(K)}^2\right. +& \left.\sum_{
E \in \mathcal{E}_h}h_{E} \|[n \cdot \nabla {u}^n_{h}]\|_{L^2(E)}^2 \right]^{1/2},
\\
\notag  \eta_S^{(2)} (t_N)  = \sum_{n = 1}^{N-1} \tau_n \left[ \sum_{K \in \mathcal{T}_h} h_K^2
  \left\Vert  \partial_n^2 v_h - \partial_n f_h\right\Vert_{L^2(K)}^2 \right.&\\
  + &\left.\sum_{
E \in \mathcal{E}_h}h_{E} \left\Vert\left[n \cdot
  \nabla  \partial_n u_h\right]\right\Vert_{L^2(E)}^2
  \right]^{1/2}. \label{etas1}   
\end{align}

The quality of our error estimators in space and time is determined by following effectivity indices:
\begin{equation*}
ei=\frac{\eta_T+\eta_S}{e},~~\hat{ei}=\frac{\hat{\eta}_T+\eta_S}{e}\end{equation*}
where the first index contains 3-point time error estimator and space estimator, while the last measures the grade of 5-point time error estimator and space estimator. The true error is
\begin{equation*}
e=\max_{0 \leqslant n \leqslant N}\left(\left\Vert v^{n}_h-\cfrac{\partial u}{\partial t}(t_{n})\right\Vert_{L^2(\Omega)} ^{2}+\left\vert u^{n}_h-u(t_{n})\right\vert^{2}_{H^1(\Omega)}\right) ^{{1}/{2}}
\end{equation*}
Consider the problem (\ref{wave}) with $\Omega=(0,1)\times(0,1),~ T=1$ and the exact solution $u$ given by
\begin{equation}\label{exponenta}
 u(x,y,t)=e^{-100r^2(x,y,t)},
\end{equation}
where
\begin{equation}
r^2(x,y,t)=(x-0.3-0.4t^2)^2+(y-0.3-0.4t^2)^2.
\end{equation}
Thus, $u$ is a Gaussian function, whose center moves from point $(0.3,0.3)$ at $t=0$ to point $(0.7,0.7)$ at $t=1$. The transport velocity $0.8t(1,1)^T$ is peaking at $t=1$. We choose non-uniform time step as $\tau_n=\tau_0\displaystyle\cfrac{1}{{t_n}^{1/2}}$ for $n=1,\ldots,N-1$.
We interpolate initial conditions and right-hand-side function with the orthogonal projections as it noted in Remarks \ref{intImp}, \ref{intImp2}
\begin{equation}\label{Projinit}
u^0_h = \Pi_h u^0,~v^0_h = \Pi_h v^0,~f^n_h= P_h f^n,~0\leq n \leq N.
\end{equation} 
Unstructured  Delaunay meshes in space are used in all the experiments. Numerical results are reported in Table \ref{tab:wave3}. Note that this case is chosen so that the non-uniform time step is required, see Fig.\ref{Example1}.

\begin{figure}
\centering \includegraphics[height=0.4\hsize]{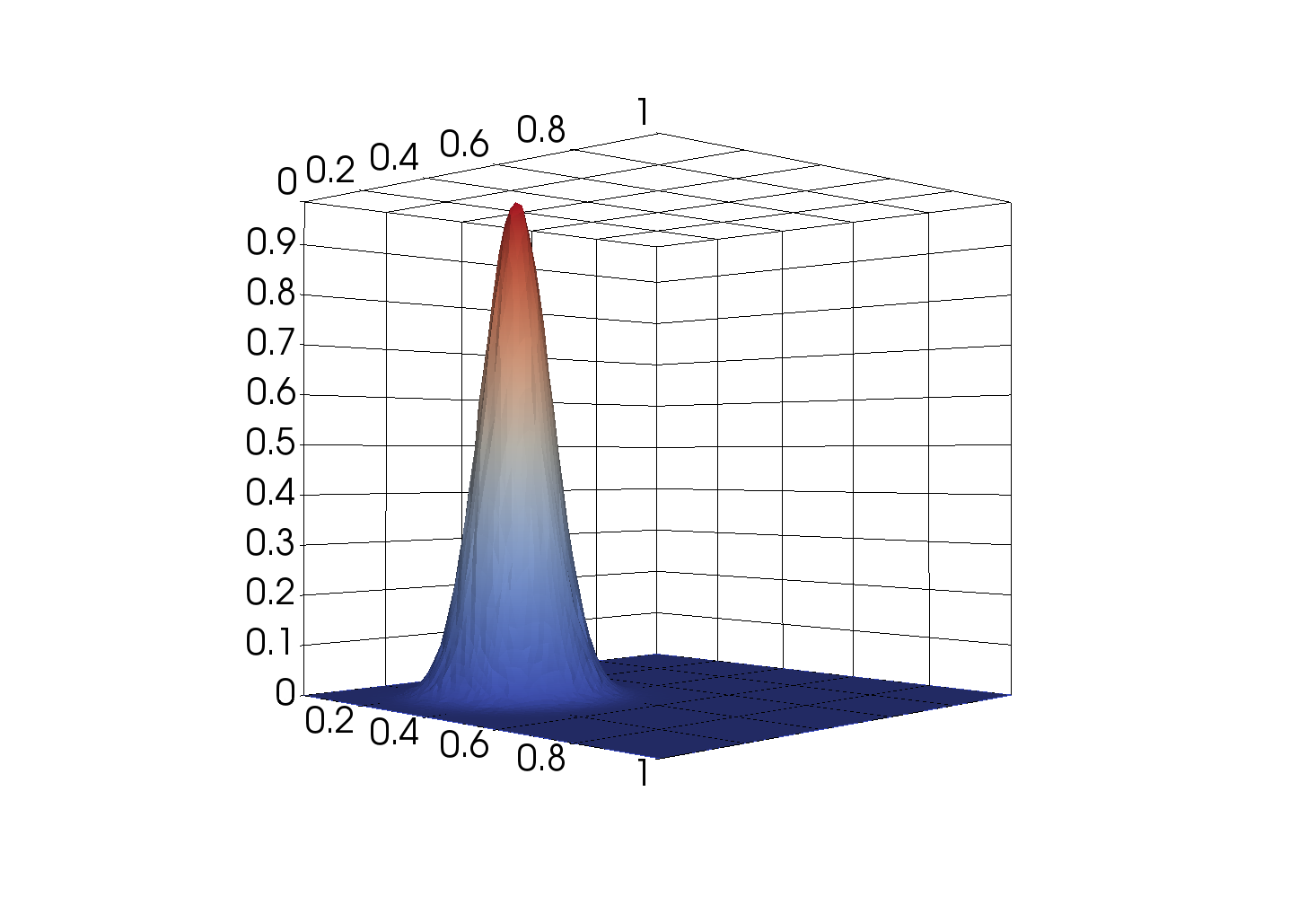}
\includegraphics[height=0.4\hsize]{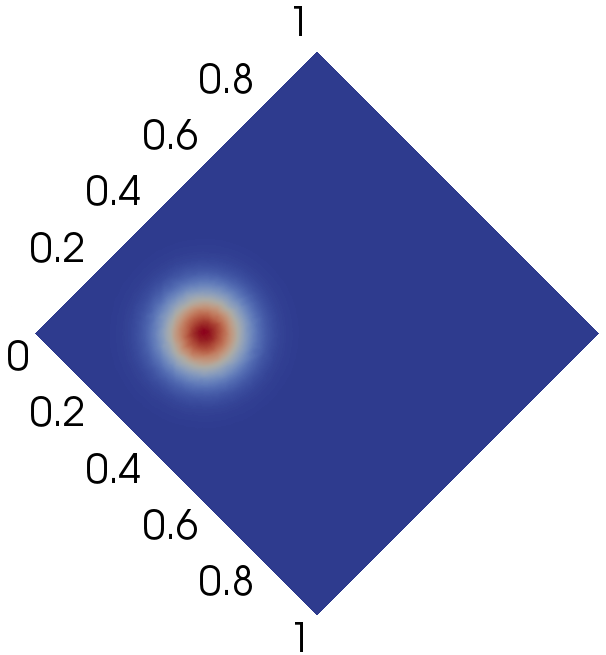}
\includegraphics[height=0.4\hsize]{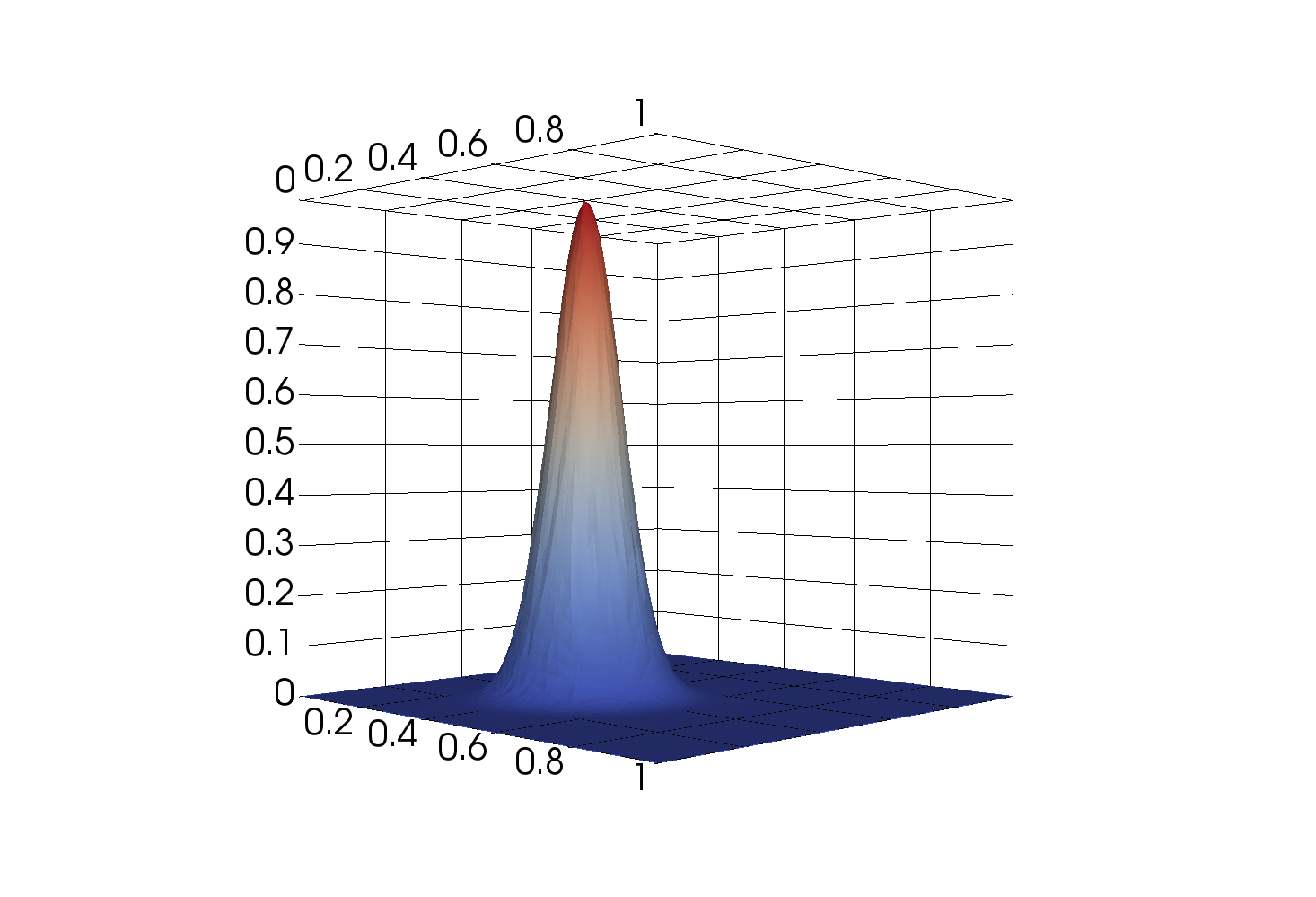}
\includegraphics[height=0.4\hsize]{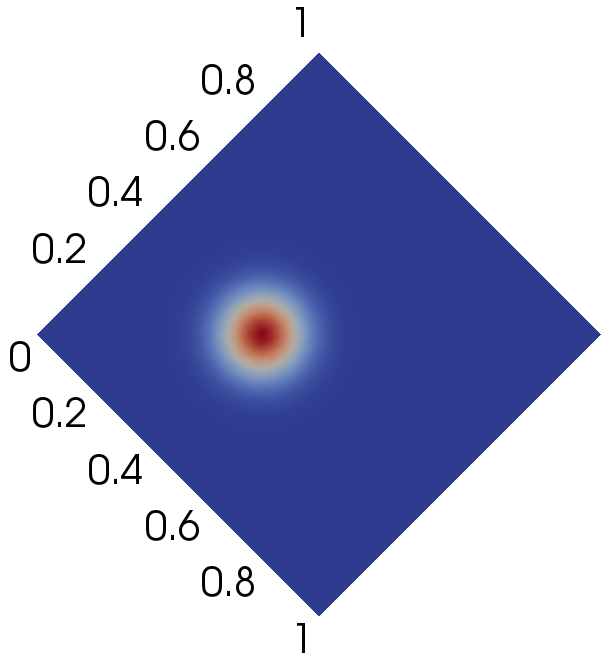}
\includegraphics[height=0.4\hsize]{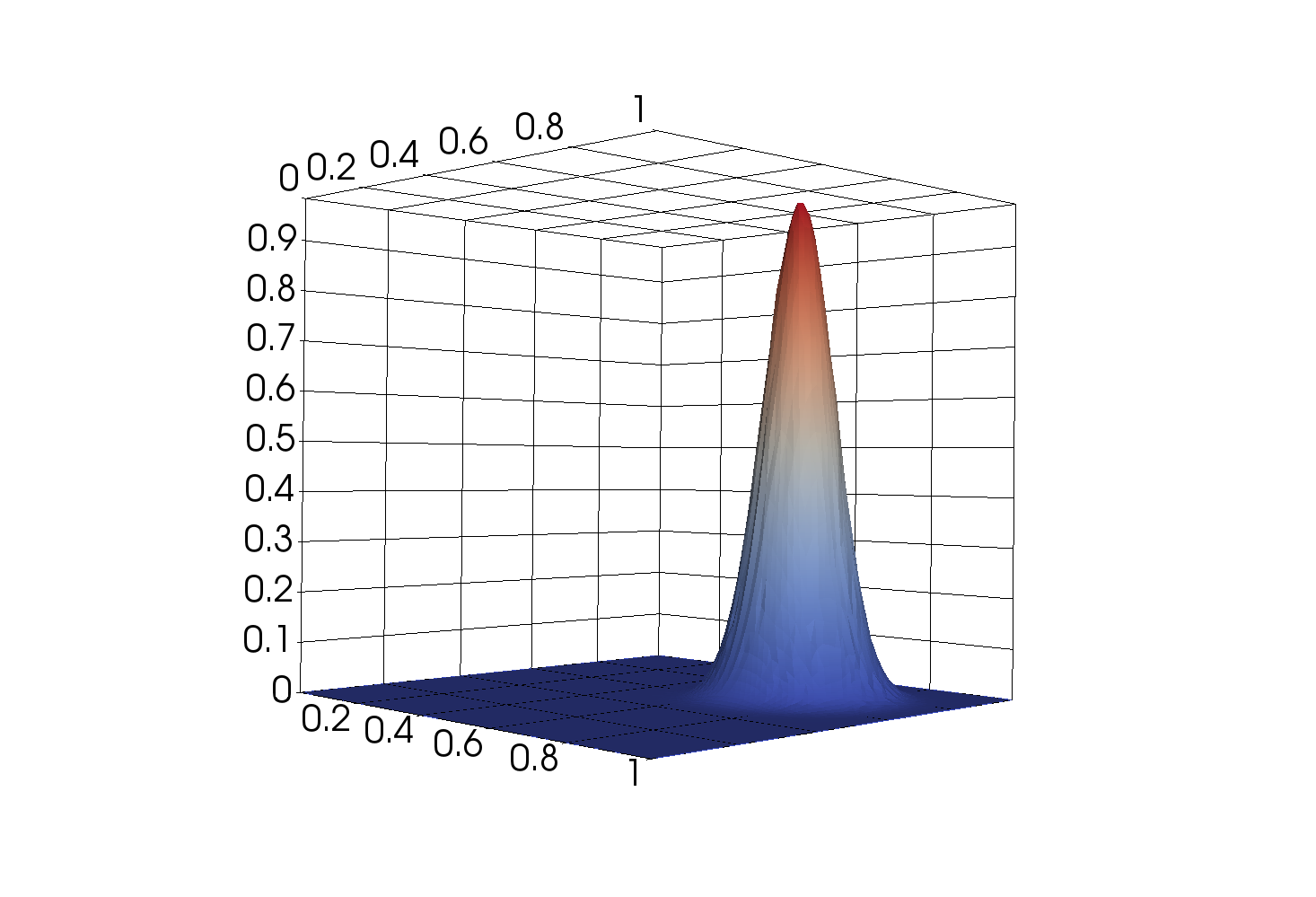}
\includegraphics[height=0.4\hsize]{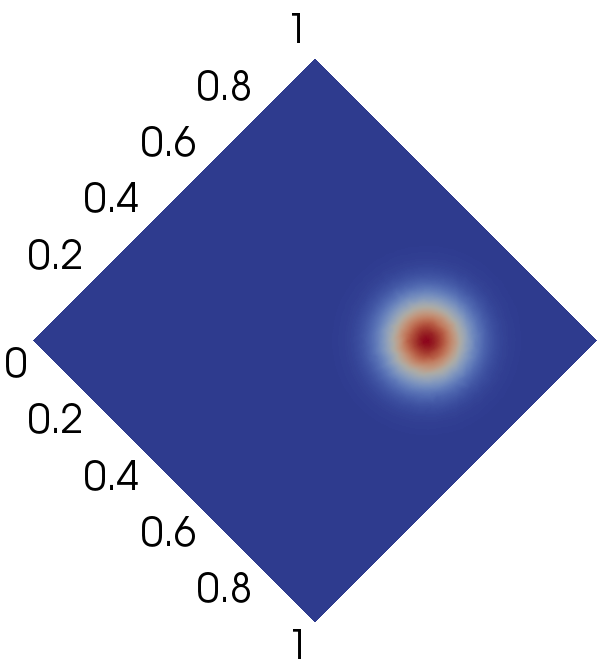}
\caption{{Solution (\ref{exponenta}). From up to down: time $t=0,~0.5,~1$.}}
\label{Example1}
\end{figure}

\begin{table}[h!]
\caption{Results for  case (\ref{exponenta}), non-uniform time step.}
\label{tab:wave3}
\begin{tabular}{llrrrrrrrr}
\hline\noalign{\smallskip}
$h$ & $\tau_0$ & $ei$ & $\hat{ei}$ & $\eta_T$ & $\hat{\eta}_T$ & $\eta_S$ &$\tau_F$ &$N_{ts}$ & e\\
\noalign{\smallskip}\hline\noalign{\smallskip}
.05 & .01 & 4.85 & 4.83 & .096 & .088 & 2.55 & .0063 & 105 & .58 \\
.025  & .0071 & 5.39 & 5.38 & .054 & .051 & 1.39 & .0045 & 149 & .27\\
.0125  & .005 & 5.94 & 5.93 & .028 & .026 & .72 & .0032 & 210 & .13\\
.00625 & .0035 & 5.94 & 5.94 & .014 & .013 & .36 & .0022 & 297 & .065\\
.003125 & .0025 & 5.94 & 5.94 & .0067 & .0065 & .18 & .0016 & 421 & .032\\
\noalign{\smallskip}\hline
\end{tabular}
\end{table}

Referring to Table \ref{tab:wave3}, we observe that when setting initial time step as $\tau^2\sim O(h)$ the error is divided by 2 each time $h$ is divided by 2, consistent with $e\sim O(\tau^2+h)$. The space error estimator and the two time error estimators behave similarly and thus provide a good representation of the true error. Both effectivity indices tend to a constant value. 

We therefore conclude that our space and time error estimators are sharp in the regime of non-uniform time steps and Delaunay space meshes. They separate well the two sources of the error and can be thus used for the mesh adaptation in space and time. In particularly, 3-point and 5-point time estimators become more and more close to each other when $h$ and $\tau$ tend to 0.

\bibliographystyle{acm}
\bibliography{apost}
-
\end{document}